\newtheorem{thm}{Theorem}[section]
\newtheorem{prop}[thm]{Proposition}
\newtheorem{lemma}[thm]{Lemma}
\newtheorem{con}[thm]{Conjecture}
\newtheorem{obs}[thm]{Observation}
\theoremstyle{definition}
\newtheorem{rem}[thm]{Remark}
\newtheorem{example}[thm]{Example}
\newtheorem{defn}[thm]{Definition}
\newtheorem{definition}[thm]{Definition}
\newcommand{\bbn}{\mathbb{N}}
\DeclareMathOperator\des{des}
\DeclareMathOperator\ides{ides}
\definecolor{fondo}{rgb}{0.898,0.996,0.898}
\definecolor{diagonal}{rgb}{0.466,0,0}
\title{Blockwise Simple Permutations}
\begin{document}
\maketitle
\author{Eli Bagno, Estrella Eisenberg, Shulamit Reches and Moriah Sigron}

\begin{abstract}

A permutation is called {\it {block-wise simple}} if it contains no interval of the form $p_1\oplus p_2$ or $p_1 \ominus p_2$. We present this new set of permutations and explore some of its combinatorial properties. 
We present a generating function for this set, as well as a recursive formula for counting block-wise simple permutations. 
Following Tenner, who founded the notion of interval posets, we characterize and count the interval posets corresponding to block-wise simple permutations. We also present a bijection between these interval posets and certain tiling's of the $n$-gon. Finally, we prove that the bi-variate distribution of the descent and inverse descent
numbers are gamma-positive, provided the correctness of our recent conjecture on simple permutations.  
\end{abstract}

\section{Introduction}

This study focuses on a new set of permutations called {\it{block-wise simple permutations}}, which we define in two different but equivalent ways. The first definition is recursive and is stated here. The other, as well as the proof of the equivalence of the two definitions, is presented in Section \ref{counting}.
The notions and notations required are defined in Section \ref{background}.

\begin{definition}
\label{def of blockwise simple}
 Let $\mathcal{S}_n$ be the group of permutations of the set $\{1,\dots,n\}$. 
    
\begin{enumerate}
   \item The identity permutation $\pi=1$ is {\it block-wise simple}.
    
    \item A permutation $\pi \in\mathcal{S}_n$ is {\it block-wise simple} if there is $\sigma \in\mathcal{S}_k$ $(k \geq 4)$ which is a simple permutation (see Definition \ref{simple}), and there are $\alpha_1,\dots,\alpha_k$ which are block-wise simple permutations, such that $\pi=\sigma[\alpha_1,\dots,\alpha_k]$, the inflation of $\sigma$ by $\alpha_1,\dots,\alpha_k$ (see Definition \ref{inflation}). 
    
\end{enumerate}
\end{definition}

There are no block-wise simple permutations of orders $2$ and $3$.
For $n \in \{4,5,6\}$, a permutation is block-wise simple, if and only if it is simple. One of the first nontrivial examples of block-wise simple permutations is\\ $2413[3142,1,1,1]=4253716$.\\  

The following table, the calculations leading to most of its values are explained later in this paper, details the number of block-wise simple permutations and the number of simple permutations for $n$ less than or equal to $10$.\\ 
We denote by $W_n$ the set of all block-wise simple permutations of order $n$ and by $Simp_n$ the set of simple permutations of order $n$.  

\begin{center}
\begin{tiny}
\begin{tabular}{r||r|r|r|r|r|r|r|r|r|r|r|r}
   $n$ &  1 & 2 & 3 & 4 & 5 & 6 & 7 & 8 & 9 & 10 &11 &12 \\
  \hline\hline
 $|W_n|$  & 1 & 2  & 0  & 2  & 6  & 46 & 354  & 3034 & 29246 & 309174 & 3563562 & 44471970 \\
 \hline\hline
 $|Simp_n|$  & 1 & 2  & 0  & 2  & 6  & 46 & 338  & 2926 & 28146 & 298526  & 3454434 & 43286526  
\end{tabular}
\end{tiny}
\end{center}


In Section \ref{counting} we define the block-wise simple permutations in an alternative way and prove that the two definitions are equivalent (see Theorem \ref{equivalent}). 


Tenner \cite{T} defined the concept of an interval poset of a permutation (see Definition \ref{def interval poset}). This is an effective way of capturing all the intervals of a permutation and the set of inclusions between them in one glance. 
 An interval poset might correspond to more than one permutation. For instance, all simple permutations of a given order $n$ share the same interval poset. 
In this study, we count the number of interval posets that correspond to block-wise simple permutations. We prove that this number is exactly 
 
$$\frac{1}{n}\sum_{i=1}^{\lfloor\frac{n-1}{3}\rfloor}{n+i-1 \choose i}{n-2i-2 \choose i-1}$$

(for $n\geq 4$ ) (see Theorem \ref{num}).

We provide a bijection between the set of interval posets of block-wise permutations of order $n$ and the set of ways to place non-crossing diagonals in a convex ($n+1$)-gon to create no triangles or quadrilaterals.
 (see theorem \ref{bijection}). 
We also calculate the M\"obius function of the interval posets of block-wise simple permutations (see Theorem \ref{mob}).

Owing to the recursive definition of block-wise simple permutations, we obtain a generating function, the functional inverse of which is close to the generating function of simple permutations. This is proven in Section \ref{generating function}. \\

The two-sided Eulerian numbers, studied by Carlitz, Roselle, and Scoville, \cite{CRS}, count permutations according to their number of descents and inverse descents. 
Gessel (see \cite[Conjecture 10.2]{Branden})  conjectured that the bi-variate polynomial $\sum\limits_{\pi \in \mathcal{S}_n}s^{des(\pi)}t^{des(\pi^{-1})}$ can be written as a positive linear combination of elements of a certain 'gamma basis'. In \cite{Adin}, the authors of this paper, together with R. Adin, proposed a similar conjecture related to the set of simple permutations. Here, we show how to combinatorially prove 'gamma positivity' for block-wise simple permutations, based on the correctness of the conjecture on simple permutations. This is discussed in Section \ref{gamma positivity}.

\section{Background}\label{background}

\begin{definition}

Let $\pi=a_1 \cdots a_n \in \mathcal{S}_n$.   
An {\em interval} (or {\em block}) of $\pi$ is a non-empty contiguous
sequence of entries $a_i a_{i+1} \cdots a_{i+k}$ whose values also form a contiguous sequence of integers.
For $a<b$, $[a,b]$ denotes the interval of values that range from $a$ to $b$. 
Clearly, $[n]:=[1,n]$ is an interval as well as $\{i\}$ for each $i \in [n]$. These are called trivial intervals. The other intervals are called {\it proper}.
\end{definition}

\begin{example}
The permutation $\pi=314297856$ has $[5,9]=97856$ as a proper interval as well as the following proper intervals: $[1,4],[5,6],[7,8],[7,9]$, $[5,8]$. 
\end{example}

\begin{definition}
\label{simple}
A permutation $\pi \in \mathcal{S}_n$ is called {\em simple} if it does not have proper intervals. 
\end{definition}

\begin{example}\label{ex:simple up to 5}
The permutation $3517246$ is simple. 
\end{example}

\begin{definition}

Let $\pi \in \mathcal{S}_m$ and $\sigma \in \mathcal{S}_n$. 
The {\em direct sum} of $\pi$ and $\sigma$ is the permutation $\pi \oplus \sigma \in \mathcal{S}_{m+n}$ defined by
\[
(\pi \oplus \sigma)_i =
\begin{cases} 
\pi_i, & \text{if } i \leq m; \\
\sigma_{i-m}+m, & \text{if } i>m,
\end{cases}
\]
Their {\em skew sum} is the permutation $\pi \ominus \sigma \in \mathcal{S}_{m+n}$ defined by
\[
(\pi \ominus \sigma)_i = 
\begin{cases} 
\pi_{i}+n, & \text{if } i \leq m; \\
\sigma_{i-m},  & \text{if } i>m.
\end{cases}
\]
\end{definition}
\begin{example}
If $\pi=132$ and $\sigma=4231$ then $\pi \oplus \sigma=1327564$ and $\pi \ominus \sigma=5764231$.
\end{example}

The $\oplus$ and $\ominus$ operations are the simplest examples of inflation.

\begin{definition}
\label{inflation}
Let $n_1, \ldots, n_k$ be positive integers, with $n_1 + \ldots + n_k = n$.
The {\em inflation} of a permutation $\pi \in \mathcal{S}_k$ by the permutations $\alpha_i \in \mathcal{S}_{n_i}$ $(1 \leq i \leq k)$ is 
the permutation $\pi[\alpha_1, \ldots, \alpha_k] \in \mathcal{S}_n$ obtained by replacing for each $1 \leq i \leq k$ the $i$-th entry of $\pi$ with a block that is order-isomorphic to the permutation $\alpha_i$
on the numbers $\{s_i + 1, \ldots, s_i + n_i\}$ instead of $\{1, \ldots, n_i\}$, where $s_i = n_1 + \ldots + n_{i-1}$ $(1 \leq i \leq k)$. 
\end{definition}
It should be noted that $\pi \oplus \sigma = 12[\pi,\sigma]$ and $\pi \ominus \sigma = 21[\pi,\sigma]$. 

\begin{example}
The inflation of $2413$ by $213,21,132$ and $1$ is 
\[
2413[213,21,132,1]=546 \ 98 \ 132 \ 7.
\]
\end{example}

We present the following result from \cite{AA05}, which lays out the structure of permutations as inflation's of simple permutations.
\begin{thm}\label{Atkinson}
For each $n \geq  2$ and any $\pi \in \mathcal{S}_n$ there exists a unique simple permutation $\sigma \in \mathcal{S}_k$ such that 
$\pi = \sigma [ \alpha_1, \dots,\alpha_k] $. Moreover, if $k \geq 4$ then $\alpha_1, . . . , \alpha_k$ are uniquely determined.
\end{thm}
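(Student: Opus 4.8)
The plan is to follow the substitution-decomposition argument of Albert and Atkinson, whose engine is the elementary \emph{interval lemma}: if $A$ and $B$ are intervals of $\pi$ with $A \cap B \neq \emptyset$, then $A \cup B$ and $A \cap B$ are again intervals, and if in addition neither of $A, B$ contains the other, then $A \setminus B$ and $B \setminus A$ are intervals as well. I would prove this first, directly from the definition: an interval is exactly a set of positions that is contiguous and whose set of values is also contiguous, and both contiguity properties are inherited by the union and intersection of two overlapping sets that share a common position.

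Next I would set up the governing trichotomy. Call $\pi$ \emph{sum-decomposable} if $\pi = \alpha \oplus \beta$ with $\alpha, \beta$ nonempty and \emph{skew-decomposable} if $\pi = \alpha \ominus \beta$; a one-line comparison of the bottom and top value-ranges shows these cannot hold at once. If $\pi$ is sum-decomposable I take $\sigma = 12$ (splitting off the first sum-indecomposable component), and symmetrically $\sigma = 21$ if it is skew-decomposable; in both cases $\sigma$ is simple of length $2$. If $\pi$ is neither, I consider its \emph{maximal proper intervals}. Two distinct ones cannot be nested (by maximality), and the interval lemma forbids them to overlap: an overlap would make $A \cup B$ a proper interval strictly larger than both, unless $A \cup B = [n]$, and a short value-range argument shows that overlapping proper intervals whose union is all of $[n]$ force the leftmost block to carry either the bottom or the top values, i.e.\ force $\pi$ to be sum- or skew-decomposable, against the case assumption. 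Hence the maximal proper intervals are pairwise disjoint; together with the positions they miss (taken as singletons) they partition $[n]$ into consecutive blocks $B_1, \ldots, B_k$, each an interval, and since distinct blocks are disjoint intervals their value-ranges are comparable. Letting $\sigma \in \mathcal{S}_k$ record the relative order of the blocks and $\alpha_i$ the pattern of $B_i$ gives $\pi = \sigma[\alpha_1, \ldots, \alpha_k]$, and $\sigma$ is simple because any proper interval of $\sigma$ would pull back to a proper interval of $\pi$ spanning several blocks, contradicting maximality. As no simple permutation has length $3$ and a simple $\sigma$ of length $2$ would reinstate sum- or skew-decomposability, here $k \geq 4$, completing existence.

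For uniqueness of $\sigma$ I would note that the three cases are mutually exclusive and intrinsic to $\pi$. If $\pi = \sigma[\alpha_1, \ldots, \alpha_k]$ with $\sigma$ simple of length at least $4$, then $\pi$ is neither sum- nor skew-decomposable, since such a splitting of $\pi$ would descend to one of $\sigma$, impossible for a simple permutation of length $\geq 3$. Thus whether $\sigma$ equals $12$, equals $21$, or is a simple permutation of length $\geq 4$ is dictated by which class $\pi$ occupies, and in the last case I would show that every proper interval of $\pi$ lies inside a single block $B_i$ (otherwise it would span blocks and descend to a proper interval of the simple $\sigma$). Consequently the nontrivial blocks are precisely the maximal proper intervals of $\pi$, the block partition is canonical, and both $\sigma$ and each $\alpha_i$ are recovered uniquely as the quotient pattern and the block patterns respectively; this simultaneously yields uniqueness of the $\alpha_i$ when $k \geq 4$.

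The main obstacle is the disjointness step in the existence proof — ruling out overlapping maximal proper intervals. Everything else is either the one-line interval lemma or bookkeeping against the trichotomy, whereas here one must argue carefully that an overlap with union $[n]$ forces a global $\oplus$ or $\ominus$ structure; that value-range analysis is where the real content of the theorem sits.
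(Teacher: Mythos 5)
The paper does not actually prove this statement; it is quoted from Albert and Atkinson \cite{AA05} as a known structural result. Your proposal correctly reconstructs the standard substitution-decomposition argument from that source: the interval lemma, the trichotomy into sum-decomposable, skew-decomposable and indecomposable, disjointness of the maximal proper intervals in the third case (including the right value-range analysis when two overlapping proper intervals have union $[n]$), and recovery of the canonical block partition for uniqueness. The only step stated a little too quickly is in the uniqueness part: a proper interval $I$ of $\pi$ meeting several blocks descends, after saturating with the partially met end blocks via the interval lemma, to an interval of $\sigma$ of size at least $2$, which for simple $\sigma$ need not be \emph{proper} --- it could be all of $\sigma$ --- so one must additionally peel off a partially covered end block (using $I \setminus B_1$ or $I \setminus B_k$, again intervals by your lemma) and rerun the argument to contradict $I \neq [n]$. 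With that point filled in, the argument is complete and is essentially the proof in the cited reference.
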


\section{Enumeration of block-wise simple permutations}\label{counting}
\subsection{An alternate definition of block-wise simple permutations} Recall Definition \ref{def of blockwise simple}.
We begin this section with another definition of block-wise simple permutations. 
\begin{definition}\label{def by 2 blocks}
A permutation $\pi \in \mathcal{S}_n$ is block-wise simple if and only if it has no interval of the form $p_1\oplus p_2$ or $p_1 \ominus p_2$. 
\end{definition}

\begin{thm} \label{equivalent} 
The two definitions of block-wise simple permutations are equivalent. 
\end{thm}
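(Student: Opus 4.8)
The plan is to prove the biconditional by strong induction on $n$, using the substitution decomposition of Theorem~\ref{Atkinson} as the main engine. The one structural fact I will rely on throughout is a dictionary between the two definitions: a permutation is $\oplus$-decomposable (respectively $\ominus$-decomposable), i.e. equal to $p_1\oplus p_2$ (respectively $p_1 \ominus p_2$) for nonempty $p_1,p_2$, precisely when its unique simple skeleton from Theorem~\ref{Atkinson} is $12$ (respectively $21$), that is, precisely when $k=2$. Since the only simple permutations have order $1$, $2$, or at least $4$, for $\pi \in \mathcal{S}_n$ with $n \geq 2$ the skeleton $\sigma \in \mathcal{S}_k$ satisfies either $k=2$ or $k \geq 4$.

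First I would isolate the technical heart as a lemma on the intervals of an inflation: if $\pi = \sigma[\alpha_1,\dots,\alpha_k]$ and every $\alpha_i$ is both $\oplus$- and $\ominus$-indecomposable (allowing the trivial block $1$), then every interval $I$ of $\pi$ meeting two or more of the blocks $B_1,\dots,B_k$ is a union of complete blocks $B_a \cup \cdots \cup B_b$ whose index set $\{a,\dots,b\}$ is an interval of $\sigma$. Since positions inside each block are consecutive, the set of blocks met by $I$ is a set of consecutive indices $\{a,\dots,b\}$ with every interior block fully contained in $I$; the only thing to rule out is that $I$ meets $B_a$ or $B_b$ in a proper, nonempty piece. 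Here is where I expect the main obstacle: such a piece is a suffix (for $B_a$) or prefix (for $B_b$) of positions whose values, by contiguity of the value set of $I$, form an extreme contiguous block of the value range of $\alpha_a$ (respectively $\alpha_b$); but a nonempty proper extreme block of values splitting off a suffix or prefix is exactly a witness that $\alpha_a$ (respectively $\alpha_b$) is $\oplus$- or $\ominus$-decomposable, contradicting the hypothesis. This is the crux: indecomposability of the blocks is precisely what forbids intervals from crossing block boundaries partially. Once the boundary blocks are forced to be fully contained, the union of complete blocks has a value set that is contiguous, which makes $\{a,\dots,b\}$ an interval of $\sigma$.

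With the lemma in hand, the direction Definition~\ref{def of blockwise simple} $\Rightarrow$ Definition~\ref{def by 2 blocks} is an induction on $n$. For $\pi = \sigma[\alpha_1,\dots,\alpha_k]$ with $\sigma$ simple, $k \geq 4$, and each $\alpha_i$ block-wise simple in the recursive sense, the induction hypothesis gives that each $\alpha_i$ has no $\oplus$- or $\ominus$-interval; in particular each $\alpha_i$ is indecomposable, so the lemma applies. Any interval of $\pi$ of the form $p_1 \oplus p_2$ or $p_1 \ominus p_2$ must then be either contained in a single block, forcing such an interval inside some $\alpha_i$ and contradicting the hypothesis on $\alpha_i$, or a union of complete blocks indexed by an interval of $\sigma$; since $\sigma$ is simple with $k \geq 4$ it has no proper intervals, so this union is all of $\pi$, whence $\pi$ itself is $\oplus$- or $\ominus$-decomposable. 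By the dictionary above this would force the skeleton of $\pi$ to be $12$ or $21$, contradicting $k \geq 4$ and the uniqueness in Theorem~\ref{Atkinson}.

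For the converse, Definition~\ref{def by 2 blocks} $\Rightarrow$ Definition~\ref{def of blockwise simple}, I again induct on $n$, the base $n=1$ being the identity. Given $\pi$ with no $\oplus$- or $\ominus$-interval and $n \geq 2$, write its skeleton $\pi = \sigma[\alpha_1,\dots,\alpha_k]$. If $k=2$ then $\pi$ is $\oplus$- or $\ominus$-decomposable, so $\pi$ itself is an interval of the forbidden form, a contradiction; since $k=3$ is impossible (no simple permutation has order $3$), we get $k \geq 4$. Finally, every interval of a block $\alpha_i$ lifts to an interval of the same type in $\pi$, so each $\alpha_i$ again has no $\oplus$- or $\ominus$-interval; as $n_i < n$, the induction hypothesis makes each $\alpha_i$ block-wise simple in the recursive sense, and therefore $\pi = \sigma[\alpha_1,\dots,\alpha_k]$ is too. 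Combining the two implications completes the induction and the proof.
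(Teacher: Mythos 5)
Your proof is correct and rests on the same engine as the paper's, namely the substitution decomposition of Theorem~\ref{Atkinson}: your converse direction (induction on $n$, ruling out $k=2$ because $\pi$ itself would be a forbidden interval, ruling out $k=3$ because no simple permutation has order $3$, and passing the hypothesis down to the $\alpha_i$) is just the induction form of the paper's infinite-descent argument. The one substantive difference is in the forward direction, which the paper dismisses with ``it is clear'': you actually prove it, via the lemma that when every block of an inflation is $\oplus$- and $\ominus$-indecomposable, an interval meeting two or more blocks must be a union of complete blocks indexed by an interval of the skeleton (a partial overlap with a boundary block would exhibit an extreme contiguous value segment splitting off a prefix or suffix, i.e.\ a decomposition of that block). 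That lemma, combined with the simplicity of the skeleton and the uniqueness clause of Theorem~\ref{Atkinson} to exclude $\pi$ itself being $\oplus$- or $\ominus$-decomposable, is exactly the content the paper's ``clear'' is hiding, so your write-up is the more complete of the two.
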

\begin{proof}
If $\pi\in \mathcal{S}_n$ is block-wise simple according to definition \ref{def of blockwise simple}, 
it is clear that $\pi$ contains no interval of the form $p_1\oplus p_2$ or $p_1 \ominus p_2$.

On the other hand, let $\pi\in \mathcal{S}_n, n>1$, be such that $\pi$ has no interval of the form $p_1\oplus p_2$ or $p_1 \ominus p_2$. Based on theorem \ref{Atkinson}, we can write $\pi=\sigma_1[\alpha_1^1,\dots,,\alpha_k^1]$ with $\sigma_1\in Simp_k$ and assume to the contrary that $\pi$ is not block-wise simple. Because $\pi$ has no intervals of the form $p_1\oplus p_2$ or $p_1 \ominus p_2$, we must have  $k\geq 4$ and there is some $1\leq i \leq k$ such that $\alpha_i^1$ is not block-wise simple. Assuming without loss of generality that $i=1$, we can write $\alpha_1^1=\sigma_2[\alpha_1^2,\dots \alpha_l^2]$ . 
As $\pi$ has no intervals of the form $p_1\oplus p_2$ or $p_1 \ominus p_2$, $\sigma_2$ is also simple of order $l\geq 4$. Without loss of generality, $\alpha_1^2$ is not block-wise simple. Proceeding in this manner, we obtain a sequence of non-block-wise simple permutations $(\alpha_1^j)$ which are inflation's of simple permutations, such that the sequence of the orders $(|\alpha_1^j|)$ is strictly decreasing. Hence, some $j$ exists, such that $|\alpha_1^j|=1$ whereas $\alpha_1^j$ is not block-wise simple. However, this is a contradiction. 
\end{proof}

\subsection{Counting block-wise simple permutations}

We now discuss the counting of block-wise simple permutations, starting with some values for low orders.

\begin{example}[$n=7$]\label{n=7}

Each simple permutation $\sigma$ of order $7$ induces a block-wise simple permutation of the form $\pi=\sigma[1,1,1,1,1,1,1]$ which is obviously simple. In this manner, we obtain $338$ block-wise simple permutations.  (see Fig. \ref{fig:n7Tree}, right).
Moreover, for each block-wise simple permutation $\tau$ of order $4$, and for each simple permutation $\sigma$ of order $4$, all permutations $$\pi\in \{\sigma[\tau,1,1,1],\sigma[1,\tau,1,1],\sigma[1,1,\tau,1],\sigma[1,1,1,\tau]\}$$ are block-wise simple. These results in $16$ new permutations, which sums up to $354$ block-wise simple permutations, together with the simple permutations of order $7$.   (see Fig. \ref{fig:n7Tree}, left).
\end{example}

 \begin{figure}
     \centering

 \begin{tikzpicture}
       \tikzstyle{every node} = [rectangle]
      
         \node (4) at (-4,1) {$4$};
            \node (41) at (-5.5,0) {$4$};
            \node (42) at (-4.5,0) {$1$};
            \node (43) at (-3.5,0) {$1$};
            \node (44) at (-2.5,0) {$1$};
           
          \node (exp4) at (-4,-1) {$|Simp_4| \cdot |Simp_4| \cdot 4 = 16$};

        \node (7) at (4,1) {$7$};
            \node (71) at (1,0) {$1$};
            \node (72) at (2,0) {$1$};
            \node (73) at (3,0) {$1$};
            \node (74) at (4,0) {$1$};
            \node (75) at (5,0) {$1$};
            \node (76) at (6,0) {$1$};
            \node (77) at (7,0) {$1$};
            
          \node (exp7) at (4,-1) {$|Simp_7|=338$};  
            
        \foreach \from/\to in {4/41,4/42,4/43,4/44,7/71,7/72,7/73,7/74,7/75,7/76,7/77}
            \draw[->] (\from) -- (\to);
    \end{tikzpicture}
 
\caption{ $n=7$.  }
     \label{fig:n7Tree}
 \end{figure}

\begin{example}[$n=8$]
For each simple permutation $\sigma$ of order $8$, we have the  block-wise simple permutation of the form $\pi=\sigma[1,1,1,1,1,1,1,1]$ . This contributes $2926$ block-wise simple permutations which are also simple.   
Moreover, for each block-wise simple permutation $\tau$ of order $5$ and for each simple permutation $\sigma$ of order $4$, all the permutations $$\pi\in \{\sigma[\tau,1,1,1],\sigma[1,\tau,1,1],\sigma[1,1,\tau,1],\sigma[1,1,1,\tau]\}$$ are block-wise simple. The contribution is $48$. 

Moreover, for each block-wise simple permutation $\tau$ of order $4$ and for each simple permutation $\sigma$ of order $5$, all the permutations:\\ $$\pi\in \{\sigma[\tau,1,1,1,1],\sigma[1,\tau,1,1,1],\sigma[1,1,\tau,1,1],\sigma[1,1,1,\tau,1],\sigma[1,1,1,1,\tau] \}$$ are block-wise simple. 
The total contribution is $60$. 
The total of block-wise permutations of order $8$ is thus $3034$ (see Fig. \ref{fig:n8Tree}).
 \end{example}

\begin{figure}
     \centering

 \begin{tikzpicture}
       \tikzstyle{every node} = [rectangle]
      
         \node (5) at (-4,1) {$5$};
            \node (51) at (-6,0) {$4$};
            \node (52) at (-5,0) {$1$};
            \node (53) at (-4,0) {$1$};
            \node (54) at (-3,0) {$1$};
            \node (551) at (-2,0) {$1$};
           
          \node (exp5) at (-4,-1) {}; 
          \node (exp54) at (-4,-1) {$|Simp_5| \cdot |Simp_4| \cdot 5 = 60$};
       
        \node (8) at (4,1) {$8$};
            \node (81) at (0.5,0) {$1$};
            \node (82) at (1.5,0) {$1$};
            \node (83) at (2.5,0) {$1$};
            \node (84) at (3.5,0) {$1$};
            \node (85) at (4.5,0) {$1$};
            \node (86) at (5.5,0) {$1$};
            \node (87) at (6.5,0) {$1$};
            \node (88) at (7.5,0) {$1$};
          \node (exp8) at (4,-1) {};
         \node (exp81) at (4,-1) {$|Simp_8| = 2926$};
       
         \node (4) at (0,-3) {$4$};
            \node (41) at (-1.5,-4) {$5$};
            \node (42) at (-0.5,-4) {$1$};
            \node (43) at (0.5,-4) {$1$};
            \node (44) at (1.5,-4) {$1$};
           
          \node (exp4) at (0,-5) {};
           \node (exp54) at (0,-5) {$|Simp_4| \cdot |Simp_5| \cdot 4 = 48$};
            
        \foreach \from/\to in {5/51,5/52,5/53,5/54,5/551,8/81,8/82,8/83,8/84,8/85,8/86,8/87,8/88,4/41,4/42,4/43,4/44}
            \draw[->] (\from) -- (\to);
\end{tikzpicture}
 
\caption{$ n=8. $}
     \label{fig:n8Tree}
\end{figure}

 From these two examples, we can determine the general pattern given by the following recursion: 
 \begin{obs}\label{formula}
 Let $w_n=|W_n|$ and $s_n=|Simp_n|$. Then, for $n\geq 4$ we have 
 \begin{equation}\label{recursion}
 w_n=\sum_{l=4}^n{s_l\sum_{\lambda=(\lambda_1,\dots,\lambda_l)\in Comp(n,l)}w_{\lambda_1}\cdots w_{\lambda_l}}
 \end{equation}
 where $Comp(n,l)$ is the set of compositions of $n$ in $l$ parts. \\
\end{obs}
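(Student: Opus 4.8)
The plan is to derive the recursion directly from the recursive Definition \ref{def of blockwise simple}, using the uniqueness half of Theorem \ref{Atkinson} to guarantee that every block-wise simple permutation is counted exactly once. By Definition \ref{def of blockwise simple}, a block-wise simple permutation $\pi\in\mathcal{S}_n$ with $n\geq 4$ is precisely one of the form $\pi=\sigma[\alpha_1,\dots,\alpha_l]$, where $\sigma\in\mathcal{S}_l$ is simple with $l\geq 4$ and each $\alpha_i$ is itself block-wise simple. So the enumeration strategy is to sum over all possible ``top-level'' simple permutations $\sigma$ and all ways of filling in the blocks $\alpha_1,\dots,\alpha_l$ with block-wise simple permutations whose sizes total $n$.

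First I would fix the number of blocks $l$, ranging over $4\leq l\leq n$ (note $l\geq 4$ is forced by the definition, and $l\leq n$ since each block has size at least $1$). For a fixed $l$, I would choose the top-level simple permutation $\sigma\in Simp_l$, giving the factor $s_l$. Next, I would choose a composition $\lambda=(\lambda_1,\dots,\lambda_l)\in Comp(n,l)$ recording the sizes of the blocks, and then for each $i$ choose an actual block-wise simple permutation $\alpha_i\in W_{\lambda_i}$, contributing $w_{\lambda_1}\cdots w_{\lambda_l}$ choices by the multiplication principle. Summing over all these choices yields exactly the right-hand side of \eqref{recursion}, so the heart of the argument is establishing that the assignment $\pi\mapsto(\sigma;\alpha_1,\dots,\alpha_l)$ is a \emph{bijection} between $W_n$ and the set of such tuples.

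The main obstacle, and the point requiring the most care, is the well-definedness and injectivity of this decomposition, i.e.\ that distinct tuples $(\sigma;\alpha_1,\dots,\alpha_l)$ give distinct permutations and that every block-wise simple $\pi$ arises from exactly one tuple. This is precisely where I would invoke Theorem \ref{Atkinson}: for $\pi\in\mathcal{S}_n$ with $n\geq 2$, there is a \emph{unique} simple $\sigma$ with $\pi=\sigma[\alpha_1,\dots,\alpha_l]$, and when $l\geq 4$ the blocks $\alpha_1,\dots,\alpha_l$ are uniquely determined as well. Since our $\pi$ are block-wise simple and hence (being non-identity of order $\geq 4$) force $l\geq 4$, the uniqueness clause applies in full, so no permutation is overcounted and no tuple is counted twice. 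One small verification to record is that the $\alpha_i$ appearing in Atkinson's decomposition are indeed block-wise simple: this follows because if some $\alpha_i$ contained an interval of the form $p_1\oplus p_2$ or $p_1\ominus p_2$, that same interval would appear inside $\pi$, contradicting that $\pi$ is block-wise simple (by the equivalence in Theorem \ref{equivalent}).

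Finally, I would confirm that the range of summation is exactly $4\leq l\leq n$: the lower bound is dictated by Definition \ref{def of blockwise simple}, and the formula in \eqref{recursion} writes the outer sum over $l$ with this range. With the decomposition shown to be a bijection, the counting identity follows immediately from the product rule over the independent choices of $\sigma$ and of the block permutations $\alpha_i$ along each composition $\lambda$. I expect the routine part to be the bookkeeping of compositions, while the genuine content is the appeal to Atkinson's uniqueness theorem to license treating the decomposition as a bijection rather than merely a surjection.
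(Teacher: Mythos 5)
Your proof is correct and follows essentially the same reasoning the paper relies on: the paper states this recursion as an Observation extracted from the $n=7$ and $n=8$ examples without a formal argument, and your decomposition over the top-level simple permutation, the composition of block sizes, and the block fillings --- with Theorem \ref{Atkinson}'s uniqueness clause guaranteeing the map is a bijection --- is exactly the justification left implicit there. Your explicit check that the $\alpha_i$ in Atkinson's decomposition are themselves block-wise simple (via Theorem \ref{equivalent}) is a worthwhile detail the paper omits.
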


Two remarks are now in order. 
\begin{rem}
Note that in Equation (\ref{recursion}), simple permutations are counted where $l=n$. 
\end{rem}

\begin{rem}
Note that in Example \ref{n=7}, we counted a limited number of compositions of $7$. This is reflected in the fact that, for each $l \in \{5,6\}$, a composition of $7$ of the form $\lambda=(\lambda_1,\dots,\lambda_l)$ contributes $0$ because each composition must contain some $\lambda_i\in\{2,3\}$ such that $w_{\lambda_1}=0$.  
\end{rem}

\section{Counting the number of interval posets}

Following Tenner \cite{T}, we define an {\it interval poset}
for each permutation as follows:

\begin{definition} \label{def interval poset}
The interval poset of a permutation $\pi \in S_n$ is the poset $P(\pi)$ whose elements are the non-empty intervals of $\pi$; the order is defined by set inclusion (see for example Figure \ref{fig:IntervalPoset5123647}). 
The minimal elements are the intervals of size $1$.  
\end{definition}

In \cite{T}, the interval poset is embedded in the plane so that each node's direct descendants are increasingly ordered according to the minimum of each interval from left to right. 
We note that in \cite{BCL} another embedding of the same poset was presented.

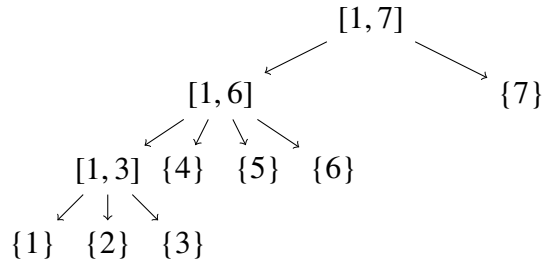
\begin{figure}[!ht]
     \centering

 \begin{tikzpicture}
       \tikzstyle{every node} = [rectangle]
      
         \node (17) at (0,0) {$[1,7]$};
            \node (16) at (-2,-1) {$[1,6]$};
            \node (7) at (2,-1) {$\{7\}$};
            \node (13) at (-3.5,-2) {$[1,3]$};
            \node (4) at (-2.5,-2) {$\{4\}$};
            \node (5) at (-1.5,-2) {$\{5\}$};
            \node (6) at (-0.5,-2) {$\{6\}$};
            \node (1) at (-4.5,-3) {$\{1\}$};
            \node (2) at (-3.5,-3) {$\{2\}$};
            \node (3) at (-2.5,-3) {$\{3\}$};
            
        \foreach \from/\to in {17/16,17/7,16/13,16/4,16/5,16/6,13/1,13/2,13/3}
            \draw[->] (\from) -- (\to);
    \end{tikzpicture}
 
\caption{Interval poset of the permutations: 5123647, 5321647, 4612357, 4632157, 7463215, 7461235, 7532164, 7512364 }
   \label{fig:IntervalPoset5123647}
 \end{figure}

If $\pi$ is a simple permutation, the interval poset of $\pi$ comprises the entire interval $[1,\dots,n]$ with minimal elements $\{1\},\dots,\{n\}$ as its only descendants. Hence, all simple permutations of a given order $n$ share the same interval poset (see for example Figure \ref{fig:IntervalPoset3142}).

\begin{figure}[!ht]
     \centering

 \begin{tikzpicture}
       \tikzstyle{every node} = [rectangle]
      
         \node (14) at (0,0) {$[1,4]$};
            \node (1) at (-1.5,-1) {$\{1\}$};
            \node (2) at (-0.5,-1) {$\{2\}$};
            \node (3) at (0.5,-1) {$\{3\}$};
            \node (4) at (1.5,-1) {$\{4\}$};

        \foreach \from/\to in {14/1,14/2,14/3,14/4}
            \draw[->] (\from) -- (\to);
    \end{tikzpicture}
 
\caption{Interval poset of permutations 3142 and 2413.  }
     \label{fig:IntervalPoset3142}
 \end{figure}
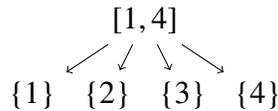

\begin{definition}
 Given a poset $P$, the permutations, whose interval poset is $P$ are called in \cite{T} the {\it generators} of $P$. 
 Formally, a set of such permutations is defined as
 $$I(P ) := \{w\in S_n : P(w) = P\}.$$
\end{definition}

Following \cite{T}, we define a {\it dual-claw poset} as a poset that has a unique maximal element and $k \geq 4$ minimal
elements, which are all covered by the maximal element. For our convenience, we omit the adjective 'dual' and call it here just {\it claw poset }.

A claw poset with $k$ minimal elements is an interval poset of a permutation $\sigma$ if and only if $\sigma$ has no proper intervals; that is, $\sigma$ is simple. Thus, its generators are the simple permutations of order $k$.
\begin{rem}\label{claw of claws}
A {\it tree poset} is a poset whose Hasse diagram is a tree.
In \cite{T} (Theorem 6.1), the author claimed that $P(\sigma)$ is a tree interval poset if and only if $\sigma$ contains no interval of the form $p_1\oplus p_2\oplus p_3$ or $p_1\ominus p_2 \ominus p_3$. 
Based on Theorem \ref{equivalent}, it is evident that the interval poset of a block-wise simple permutation is a tree. 
In fact, by Definition \ref{def of blockwise simple}, the interval poset of a block-wise simple permutation is a {\it claw of claws}. This means that the root is a claw and every node is either a claw or a leaf (see Figure. \ref{fig:DualClaws} for an example).
\end{rem}

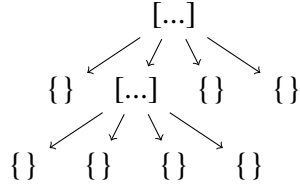
\begin{figure}[!ht]
     \centering

 \begin{tikzpicture}
       \tikzstyle{every node} = [rectangle]
      
         \node (1) at (0,0) {$[...]$};
            \node (11) at (-1.5,-1) {$\{\}$};
            \node (12) at (-0.5,-1) {$[...]$};
            \node (13) at (0.5,-1) {$\{\}$};
            \node (14) at (1.5,-1) {$\{\}$};
            \node (121) at (-2,-2) {$\{\}$};
            \node (122) at (-1,-2) {$\{\}$};
            \node (123) at (0,-2) {$\{\}$};
            \node (124) at (1,-2) {$\{\}$};
            
        \foreach \from/\to in {1/11,1/12,1/13,1/14,12/121,12/122,12/123,12/124}
            \draw[->] (\from) -- (\to);
    \end{tikzpicture}
 
\caption{Claw of claws }
     \label{fig:DualClaws}
 \end{figure}

  The first problem we tackle in this section is the enumeration of interval posets that represent block-wise simple permutations of order $n$. As described above, this problem reduces to counting claws of claws.
 
 Let $\mathcal{C}$ be the combinatorial class of all claws of claws, so that for each $n$, $\mathcal{C}_n$ is the set of all claws of claws having $n$ leaves. 
 We apply the approach of symbolic combinatorics (see \cite{FS}) to count claw of claws.

The class $\mathcal{C}$ is described by the following symbolic combinatorial equation: $$\mathcal{C}=\{\bullet\} \biguplus Seq_{\geq 4}(\mathcal{C})$$  where $\{\bullet\}$ represents a leaf and $Seq_{\geq 4}(\mathcal{C})$ represents a sequence of elements of $\mathcal{C}$ with at least four components. 
The generating function is 
    $$C(z)=z+\frac{C(z)^4}{1-C(z)}.$$ 
    
Denoting $u=C(z)$, we have $$z=u-\frac{u^4}{1-u}=u(1-\frac{u^3}{1-u}).$$
Subsequently, $u=z\phi(u)$ is obtained, where $\phi(u)=\frac{1}{1-\frac{u^3}{1-u}}$.

According to Lagrange inversion formula (see for example \cite{EC2}[Theorem 5.4.2]), we have
$$C(z)=\sum_{n\geq 1} c_n z^n $$ where $c_n=\frac{1}{n}[u^{n-1}]\phi(u)^n$. \\

To extract $[u^{n-1}]\phi(u)^n$ we compute:

$$\phi(u)^n=\left (\frac{1}{1-\frac{u^3}{1-u}}\right)^n=$$

$$\sum_{i\geq 0}{n+i-1 \choose i}\frac{u^{3i}}{(1-u)^i}=1+\sum_{i>0}{n+i-1 \choose i}\frac{u^{3i}}{(1-u)^i}=$$
$$1+\sum_{i> 0}{n+i-1 \choose i}{u^{3i}}\sum_{j \geq 0}{i+j-1 \choose j}u^j=1+\sum_{i>0}\sum_{j \geq 0}{n+i-1 \choose i}{i+j-1 \choose j}u^{j+3i}.$$

By extracting the coefficient of $u^{n-1}$, we obtain $j=n-1-3i$

$$[u^{n-1}]\phi(u)^n=\sum_{i>0}{n+i-1 \choose i}{i+n-2-3i \choose n-3i-1}=$$
$$\sum_{i>0}{n+i-1 \choose i}{n-2i-2 \choose n-3i-1}=$$
$$=\sum_{i=1}^{\lfloor\frac{n-1}{3}\rfloor}{n+i-1 \choose i}{n-2i-2 \choose i-1}$$
Because we require $n-2i-2\geq i-1$, we obtain $i\leq \frac{n-1}{3}$. \\

From the above calculations, we obtain the following theorem:
\begin{thm}\label{num}
The number of interval posets corresponding to block-wise simple permutations of order $n\geq 4$ is 
$$\frac{1}{n}\sum_{i=1}^{\lfloor\frac{n-1}{3}\rfloor}{n+i-1 \choose i}{n-2i-2 \choose i-1}.$$

\end{thm}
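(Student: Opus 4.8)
The plan is to reduce the problem to a purely combinatorial enumeration and then apply the symbolic method together with Lagrange inversion. By Remark~\ref{claw of claws}, the interval poset of a block-wise simple permutation is precisely a claw of claws, and conversely every claw of claws is realized as such an interval poset; moreover, following Tenner, each interval poset carries a canonical planar embedding in which the children of every node are ordered by the minima of the corresponding intervals. Consequently, distinct interval posets should correspond bijectively to distinct rooted \emph{plane} trees whose leaves are the singleton intervals and whose internal nodes (the claws) each have at least four children. First I would make this correspondence explicit, so that counting the interval posets of block-wise simple permutations of order $n$ becomes counting the class $\mathcal{C}_n$ of such plane trees with exactly $n$ leaves.

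Next I would encode $\mathcal{C}$ by the symbolic specification $\mathcal{C} = \{\bullet\} \uplus Seq_{\geq 4}(\mathcal{C})$, with the variable $z$ marking leaves, where $\{\bullet\}$ is a single leaf and $Seq_{\geq 4}(\mathcal{C})$ is a root together with an ordered list of at least four subtrees. Translating this into generating functions via the standard dictionary yields $C(z) = z + \frac{C(z)^4}{1-C(z)}$. Writing $u = C(z)$ and clearing the geometric series gives $z = u\left(1 - \frac{u^3}{1-u}\right)$, i.e. $u = z\,\phi(u)$ with $\phi(u) = \left(1 - \frac{u^3}{1-u}\right)^{-1}$, which is exactly the form required by the Lagrange inversion formula.

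I would then invoke Lagrange inversion (as in \cite{EC2}) to obtain $c_n = [z^n]C(z) = \frac{1}{n}[u^{n-1}]\phi(u)^n$, and extract the coefficient by applying the generalized binomial theorem twice: once to expand $\phi(u)^n = \sum_{i\ge 0}\binom{n+i-1}{i}\frac{u^{3i}}{(1-u)^i}$, and once to expand $\frac{1}{(1-u)^i} = \sum_{j\ge 0}\binom{i+j-1}{j}u^j$. Selecting the monomial $u^{3i+j} = u^{n-1}$ forces $j = n-1-3i$, which collapses the double-indexed sum to a single sum over $i$. A final application of the binomial symmetry $\binom{n-2i-2}{n-3i-1} = \binom{n-2i-2}{i-1}$, together with the nonvanishing condition $n-2i-2 \ge i-1$ that bounds $i \le \frac{n-1}{3}$, produces the claimed closed form.

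The routine part is the coefficient extraction, which is just careful bookkeeping with the generalized binomial series. The main obstacle I anticipate is the first step: one must verify that the correct combinatorial operator is $Seq$ rather than an unordered multiset construction. This hinges on the canonical left-to-right ordering of the children in Tenner's embedding, and on checking that this ordering is forced, so that no two distinct plane trees represent the same interval poset and every plane tree with internal degrees $\ge 4$ genuinely occurs. Establishing this correspondence rigorously, rather than the subsequent algebra, is where the real content of the proof lies.
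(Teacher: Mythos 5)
Your proposal matches the paper's argument essentially step for step: the paper likewise identifies the interval posets of block-wise simple permutations with claws of claws, encodes them by the specification $\mathcal{C}=\{\bullet\}\biguplus Seq_{\geq 4}(\mathcal{C})$, derives $C(z)=z+\frac{C(z)^4}{1-C(z)}$, and applies Lagrange inversion with the same two binomial expansions to reach the stated sum. Your added attention to why $Seq$ (ordered children, forced by the left-to-right ordering of the subintervals) is the right construction is a point the paper passes over quickly, but it is not a different approach.
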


The first few values of the sequence of these numbers  are $1,1,1,5,10,16,45,109,222,540$. This is sequence A054514 from OEIS \cite{Sl} which also counts the number of ways to place non-crossing diagonals in a convex $(n+4)$-gon such that there are no triangles or quadrilaterals.\\
Here, we provide a combinatorial proof presenting a bijection from the set of interval posets corresponding to block-wise simple permutations to the above-mentioned set.

We identify a polygon with its set of vertices and denote a diagonal from vertex $i$ to vertex $j$ by $(i,j)$.

\begin{figure}[!ht]
\label{interval}
    \centering
    \includegraphics[scale=0.45]{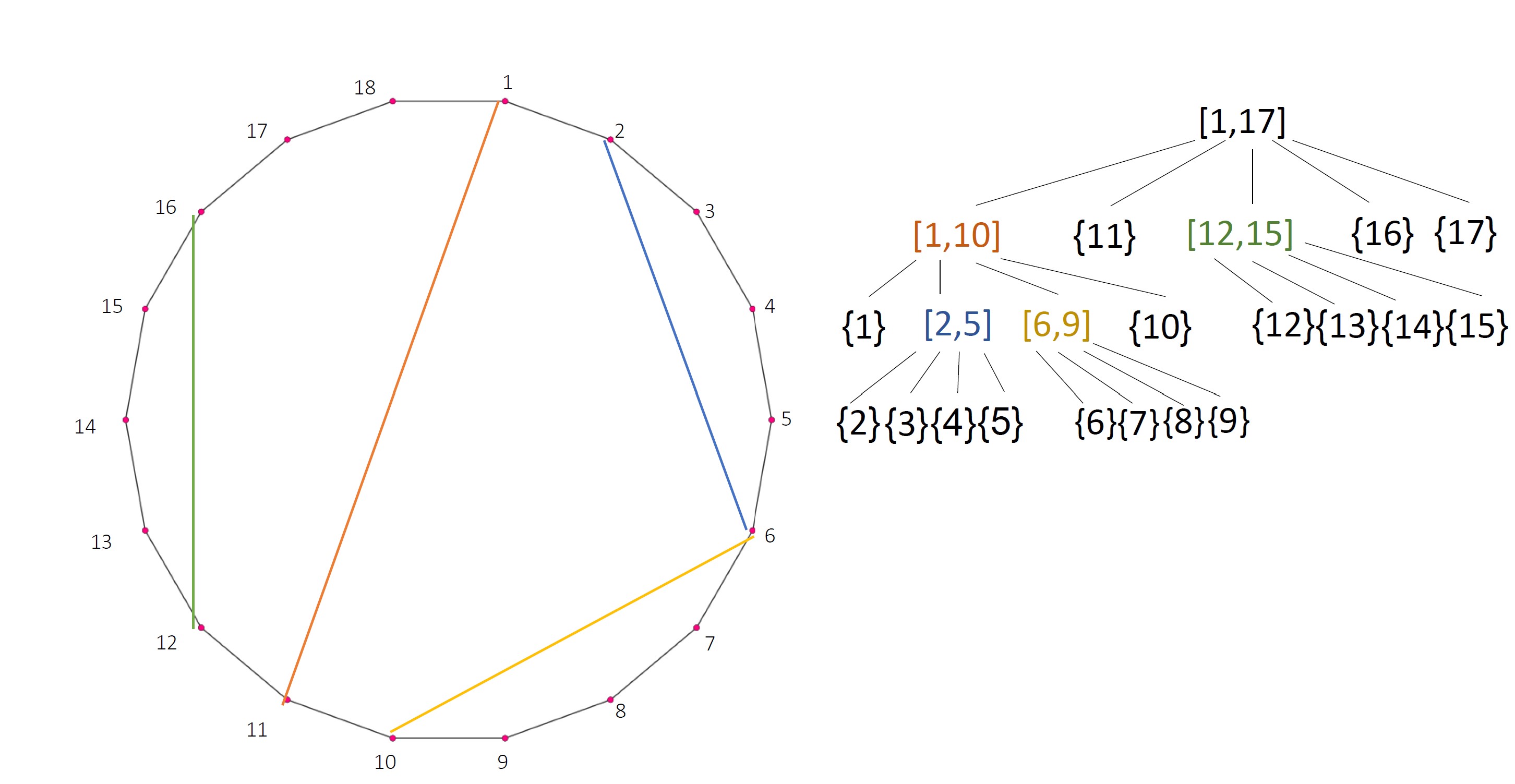}
    \caption{Right: the interval poset P. Left: the polygon $\Phi(P)$}
    \label{polygon to poset}
\end{figure}

We let $P$ be the interval poset corresponding to a block-wise simple permutation $\pi$ of order $n$. We set $\Phi(P)$ as the convex ($n+1$)-gon whose set of diagonals is $$\{(a,b+1)|\,[a,b] \text{ is an internal node of } P\}.$$ (see Figure \ref{polygon to poset} for an example).

Note that by Remark \ref{claw of claws}:
\begin{enumerate}
    \item For every two nodes $[a,b],[c,d]$ of $P$, either $[a,b]\subset [c,d]$ or $[c,d]\subset [a,b]$ or $[a,b]\cap [c,d]= \emptyset$.
    \item Each node $[a,b]$ of $P$ is an interval of size at least $4$ so $b-a \geq 3$.
    \item
    Each node $[a,b]$ of $P$ covers at least $4$ nodes, which are pairwise disjoint.
\end{enumerate}

According to (1), the polygon $\Phi(P)$ has no crossing diagonals. 
By (3), each direct descendant of an interval $[a,b]$ which is itself an interval, say $[c,d]$, creates a sub-polygon $\{c,\dots,d,d+1\}$ of size $d-c+2>4$.
There is an additional sub-polygon created by the direct descendants of $[a,b]$ that are not intervals, together with the first elements of each interval and the vertex $b+1$. By (2), this sub-polygon also has at least $5$ vertices. 
We conclude that the diagonals of the sub-polygon associated with each interval $[a,b]$ create no triangles or quadrilaterals.

\begin{example}
Consider $[a,b]=[1,17]$ in Figure \ref{polygon to poset}. The corresponding polygon is $E=\{1,\dots,18\}$. The descendants of $[1,17]$ are $[1,10]$, $\{11\}$, $[12,15]$, $\{16\}$ and $\{17\}$.

The interval $[1,10]$  creates the sub-polygon $\{1,\dots,11\}$ of order $11$. The interval $[12,15]$ creates the sub-polygon $\{12,\dots ,16\}$. Both are of size greater than $4$ by the condition on the size of intervals. The additional sub-polygon $\{1,11,12,16,17,18\}$ is of size greater than $4$ because each of the descendants of $[1,17]$ contributes at least one vertex (its first vertex if it is an interval, or itself if it is a singleton) and including the vertex $18$ makes it greater than $4$.
\end{example}
From the discussion above  one can prove the following:



\begin{thm}\label{bijection}
The number of interval posets that represent a block-wise simple permutation of order $n$ is equal to the number of ways to place non-crossing diagonals in a convex ($n+1$)-gon such that no triangles or quadrilaterals are present. 
\end{thm}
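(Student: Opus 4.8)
The plan is to promote the map $\Phi$ — already shown to send the interval poset of a block-wise simple permutation of order $n$ to a non-crossing dissection of the convex $(n+1)$-gon with no triangular or quadrilateral cell — to a bijection by producing its inverse. Injectivity of $\Phi$ is immediate: an interval poset is determined by its set of intervals together with the inclusion order, and the diagonal $(a,b+1)$ records exactly the internal node $[a,b]$; so two posets with the same image have the same internal nodes and the same singletons $\{1\},\dots,\{n\}$, hence coincide. All the content therefore lies in surjectivity, namely that every admissible dissection arises from a claw of claws.

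I would define the candidate inverse $\Psi$ as follows. Given an admissible dissection $D$ of the $(n+1)$-gon, take as internal nodes the root $[1,n]$ together with $[a,c-1]$ for each diagonal $(a,c)$ of $D$ with $a<c$, take the singletons $\{1\},\dots,\{n\}$ as leaves, and order everything by inclusion. Two observations make these intervals legitimate. First, the no-triangle/no-quadrilateral condition forces every diagonal $(a,c)$ to subtend an arc of at least four vertices (a diagonal cutting off a triangle or quadrilateral could never be split without creating a forbidden cell), so $[a,c-1]$ has size at least $4$, recovering condition (2). Second, two diagonals are non-crossing precisely when their arcs are nested or non-overlapping, possibly sharing an endpoint; under $(a,c)\mapsto[a,c-1]$ this dichotomy becomes exactly ``nested or disjoint'' for the integer intervals, which is condition (1).

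The heart of the argument, and the step I expect to be the main obstacle, is to match the planar cell structure of $D$ with the cover relations of $\Psi(D)$ so as to obtain condition (3). For an internal node $[a,b]$ I would single out the unique cell $F$ of $D$ that is bounded by the edge $(a,b+1)$ and lies on the side of that edge containing the arc $a,a+1,\dots,b+1$. Reading the remaining boundary edges of $F$ from $a$ to $b+1$, each is either a diagonal $(c,d+1)$, contributing the child $[c,d]$, or a polygon side $(e,e+1)$, contributing the singleton child $\{e\}$. The careful point is to verify that these edges traverse the arc monotonically, so that the corresponding intervals and singletons are consecutive, pairwise disjoint, and exhaust $[a,b]$, and that they are precisely the maximal proper sub-intervals of $[a,b]$ occurring in $\Psi(D)$, hence precisely the elements covered by $[a,b]$. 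Because $F$ has at least five sides, discarding its top edge $(a,b+1)$ leaves at least four children, giving condition (3). By Remark \ref{claw of claws}, $\Psi(D)$ is then the interval poset of a block-wise simple permutation of order $n$.

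Finally I would close the loop by the routine checks $\Phi\circ\Psi=\mathrm{id}$ and $\Psi\circ\Phi=\mathrm{id}$: the diagonals of $\Phi(\Psi(D))$ are exactly the $(a,b+1)$ coming from the internal nodes $[a,b]$ of $\Psi(D)$, which are exactly the diagonals of $D$; and $\Phi$ encodes each internal node $[a,b]$ of $P$ as $(a,b+1)$, from which $\Psi$ reads back $[a,b]$. Combined with injectivity, this gives the desired bijection and hence the equality of the two enumerations.
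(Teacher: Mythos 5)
Your proposal is correct and follows essentially the same route as the paper: the paper defines the same map $\Phi$ sending each internal node $[a,b]$ to the diagonal $(a,b+1)$ and checks, via properties (1)--(3) of a claw of claws, that the image is a non-crossing dissection with no triangular or quadrilateral cells, after which it asserts the theorem. You go further by explicitly constructing the inverse $\Psi$ and verifying surjectivity (the paper leaves this to the reader), and your argument there is sound, up to a harmless miscount in the phrase ``arc of at least four vertices'': excluding triangles and quadrilaterals forces each diagonal $(a,c)$ to cut off an arc of at least \emph{five} vertices, which is what yields $|[a,c-1]|=c-a\geq 4$.
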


\begin{rem}
In \cite{BCL}, the authors calculated the generating function of the number of tree interval posets using generating functionology and mentioned that this is equal to the number of ways to place non-crossing diagonals in a convex $(n+2)$-gon such that no quadrilaterals are created (sequence A054515 from OEIS \cite{Sl2}).
The function $\Phi$ defined above can be slightly amended so that it would provide a combinatorial proof of this equivalence. 
\end{rem}

We end this section with a simple consequence of Theorem 2.8 of \cite{BCL}, concerning M\"obius function of the interval poset of a block-wise simple permutation.
\begin{prop}
\label{mob}
Let $\sigma=\pi[\alpha_1,\dots,\alpha_k]$ be a block-wise simple permutation and let $P(\sigma)$ be its closed interval poset. 
For each interval $I$ of $P(\sigma)$ we have

$$\mu(I,[1,\dots,n])=\left\{ \begin{array}{lll}
1 & &I=[1,\dots,n]
  \\
-1 & & I \text{ is a coatom} \\
k-1 & & I=\emptyset \\
0   & &  \text{Otherwise}
\end{array}\right. $$
\end{prop}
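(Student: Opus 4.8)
The plan is to reduce everything to the fact, recorded in Remark \ref{claw of claws}, that the interval poset $P(\sigma)$ is a tree poset whose unique maximal element is $[1,\dots,n]$. The key structural observation I would isolate first is that in such a tree poset the principal filter of any non-empty interval is a chain. Concretely, root the Hasse diagram at $[1,\dots,n]$; every covering edge then points from a node to its parent (the unique neighbour on the path to the root), and since the root is the maximum, an easy induction shows that each node lies below its parent. Hence the set of intervals containing a fixed non-empty interval $I$ is exactly the chain of ancestors of $I$, so that the closed interval $[I,[1,\dots,n]]$ of the poset is totally ordered.

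Given this, the three non-empty cases follow at once from the M\"obius function of a chain, which is $1$ on the diagonal, $-1$ on a covering pair, and $0$ on any longer interval. Thus $\mu([1,\dots,n],[1,\dots,n])=1$; if $I$ is a coatom then $[1,\dots,n]$ covers $I$ and the ancestor chain $\{I,[1,\dots,n]\}$ has two elements, giving $\mu(I,[1,\dots,n])=-1$; and for every other non-empty $I$ the ancestor chain has length at least two, forcing $\mu(I,[1,\dots,n])=0$.

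For the bottom element I would invoke the defining recursion of the M\"obius function in the closed interval poset, $\sum_{\emptyset \le z \le [1,\dots,n]} \mu(z,[1,\dots,n])=0$, and substitute the values already found. The only nonzero terms are $+1$ from $z=[1,\dots,n]$ and $-1$ from each coatom; writing $\sigma=\pi[\alpha_1,\dots,\alpha_k]$ with $\pi$ simple of order $k$, the coatoms of $P(\sigma)$ are precisely the $k$ top-level blocks $\alpha_1,\dots,\alpha_k$, so there are exactly $k$ of them. The recursion then reads $1-k+\mu(\emptyset,[1,\dots,n])=0$, whence $\mu(\emptyset,[1,\dots,n])=k-1$, as claimed.

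I expect the only delicate point---the main obstacle---to be the bookkeeping at the bottom. One must verify that the coatoms of $P(\sigma)$ are exactly the $k$ blocks of the inflation, so that their number equals the $k$ appearing in the statement; this is where the uniqueness of the simple decomposition in Theorem \ref{Atkinson} enters. One must also check that adjoining $\emptyset$ as a new minimum does not alter the upper intervals $[I,[1,\dots,n]]$ for non-empty $I$, so that the chain computation remains valid in the closed poset. Everything else is a mechanical application of the chain formula; alternatively, the same four values can be read off directly from Theorem 2.8 of \cite{BCL}, the argument above merely making the tree structure explicit.
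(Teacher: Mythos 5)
Your proof is correct, but it takes a different route from the paper: the paper gives no argument at all for Proposition \ref{mob}, presenting it only as ``a simple consequence of Theorem 2.8 of \cite{BCL}'', whereas you supply a self-contained computation. Your key structural step --- that in a tree poset with unique maximum every element has a unique cover, so the principal filter of any non-empty interval is a chain --- is sound: two distinct covers of $I$ would each begin a saturated chain up to $[1,\dots,n]$, yielding two distinct paths from $I$ to the root in the Hasse diagram, contradicting that it is a tree. From there the chain formula gives the values $1$, $-1$, $0$, and the defining recursion $\sum_{\emptyset \le z \le [1,\dots,n]}\mu(z,[1,\dots,n])=0$ gives $k-1$ at the bottom once you identify the coatoms with the $k$ top-level blocks of the inflation $\sigma=\pi[\alpha_1,\dots,\alpha_k]$ (which is exactly the claw-of-claws description in Remark \ref{claw of claws}; the uniqueness in Theorem \ref{Atkinson} is only needed to make $k$ well defined, not to count the coatoms). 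What your approach buys is independence from the external reference and an argument that visibly uses only the tree structure; what the paper's citation buys is brevity and a statement valid for the more general interval posets treated in \cite{BCL}. The one point worth stating explicitly if you write this up is the degenerate case $n=1$, where there is no simple skeleton of order $k\ge 4$ and the formula must be read separately.
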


\section{A generating function for block-wise simple permutations}\label{generating function}
Recall that a permutation $\pi$ is called 'simple' if it has no blocks other than the singletons and the entire permutation.

In most sources in the literature, permutations of orders $1$ and $2$ are considered simple. However, there are cases in which simple permutation is defined as having an order of at least $4$ (see for example  \cite{AAK}). In this section, we adopt the latter approach.

Let $S(x)=\sum\limits_{n\geq 4}s_n x^n$ be the generating function of the simple permutations and $W(x)=\sum\limits_{n\geq 1}w_n x^n$ be the generating function of block-wise simple permutations. The following recursion is a direct consequence of Definition \ref{def of blockwise simple}. 
\begin{equation}\label{gf 1}
W(x)=x+(S\circ W)(x).
\end{equation}

Let $F(x)=\sum\limits_{k\geq 1}n!x^n$.
By denoting $t=W(x)$ and thus $x=W^{-1}(t)$ in Equation (\ref{gf 1}), we obtain for each $t$
\begin{equation}\label{st}
S(t)=t-W^{-1}(t).
\end{equation}
From Equation (2) of \cite{AAK}, we obtain  
\begin{equation}\label{klazar}
S(t)=t-\frac{2t^2}{1+t}-F^{-1}(t)
\end{equation}

and by substituting Equation (\ref{klazar}) in Equation 
(\ref{st}) we obtain 
\begin{equation}\label{final formula}
W^{-1}(t)=F^{-1}(t)+\frac{2t^2}{1+t}.
\end{equation}
Denoting the coefficient of $t^n$ in $F^{-1}(t)$ by $Com_n$ (in reference to Comtet who was the first to consider this sequence OEIS(A059372)), and letting $W^{-1}(t)=\sum\limits_{n=1}^{\infty}a_nt^n$, we obtain $a_1=1,a_2=0,a_3=0$ and for $n\geq 4$:
$$a_n=Com_n+2\cdot (-1)^n.$$ 
From (\ref{st}), we obtain for $n\geq 4$: $$a_n=-s_n.$$

By applying the Lagrange inversion formula \cite{EC2} [Theorem 5.42], we obtain an expression for the $n-th$ coefficient of $W(t)$:
$$[t^n]W(t)=\frac{1}{n}[t^{n-1}]\left(\frac{t}{W^{-1}(t)}\right)^n$$
Thus, $$[t^n]W(t)=\frac{1}{n}[t^{n-1}]\left(\frac{t}{1+\sum\limits_{k=2}^{\infty}(Com_k+2\cdot (-1)^k)t^k}\right)^n$$

From (\ref{st}), we can conclude that the sequence of block-wise simple permutations is not polynomial recursive.

\section{gamma positivity}\label{gamma positivity}

Eulerian numbers enumerate permutations according to their descent numbers. The descent number of a permutation is defined as follows:
\begin{definition}
Let $\pi \in \mathcal{S}_n$. 
\[
\des(\pi) = |\{i \in [n-1] \mid \pi(i)>\pi(i+1)\}|.
\]
The descent number of the inverse of a permutation $\pi$ is defined as $$\ides(\pi)=\des(\pi^{-1}).$$ 

\end{definition}
For example, if $\pi= 2 4 6 1 3 5$ then the only descent of $\pi$ is $3$; thus, we obtain $\des(\pi)=1$, whereas $\ides(\pi)=3$. 
Many studies have been made on the distribution of descents on certain subsets of $\mathcal{S}_n$ and on some other permutation groups.

The {\em two-sided Eulerian numbers}, studied by Carlitz, Roselle, and Scoville \cite{CRS} constitute a natural generalization. These numbers count permutations according to their number of descents as well as the number of descents of the inverse permutation.

Gessel conjectured that the corresponding generating function $A_n(s,t)=\sum_{\pi \in S_n}s^{\des(\pi)}t^{ides(\pi)}$ has a nice symmetry property, namely, gamma-positivity. His conjecture was proven in \cite{Lin}. 
To present the idea of gamma-positivity in a formal way, we present some definitions. 

A polynomial $f(q)$ is {\em palindromic} if its coefficients are the same when read from left to right and from right to left. 
If $f(q)=a_rq^r+a_{r+1}q^{r+1} +\cdots +a_sq^s$ with $a_r, a_s \ne 0$ and $r \le s$, then $a_{r+i}=a_{s-i}$ $(\forall i)$; equivalently,
$f(q) = q^{r+s} f(1/q)$.

Following Zeilberger \cite{Z}, we define the {\it darga} of a palindromic polynomial $f(q)$ as $r+s$; the zero polynomial is considered palindromic for each non-negative darga. 
The set of palindromic polynomials of darga $n-1$ is a vector space of dimension $\lfloor (n+1)/2 \rfloor$, with a {\em gamma basis}
\[
\{q^j (1+q)^{n-1-2j} \mid 0 \le j \leq \lfloor (n-1)/2 \rfloor \}.
\]
The (one-sided) {\em Eulerian polynomial} 
\[
A_n(q) = \sum_{\pi \in S_n} q^{\des(\pi)}
\]
is palindromic of darga $n-1$ and thus there are real numbers $\gamma_{n,j}$ such that 
\[
A_n(q) = \sum_{0 \leq j \leq \lfloor (n-1)/2 \rfloor} \gamma_{n,j} q^j(1+q)^{n-1-2j}.
\]
See \cite[pp.\ 72 - 79]{Pet Book} for details.

For every set $\mathcal{C}$ of permutations, we define the {\em two-sided Eulerian polynomial} of $\mathcal{C}$ as
\[
A_\mathcal{C}(s,t) = \sum_{\pi \in \mathcal{C}} s^{\des(\pi)} t^{\ides(\pi)}.
\]

Let $A_{Simp_n}(s,t)$ and $A_{W_n}(s,t)$ be the {\em two-sided Eulerian polynomials for simple permutations and block-wise simple permutations of order $n$}, respectively.

%
 For each positive integer $n$, the set $Simp_n$ of simple permutations of length $n$ is invariant under taking inverses and reverses. Consequently,  the set $W_n$ of blockwise simple permutations of length $n$ is also invariant under taking inverses and reverses. 
 
 
Thus, it can be easily proven that the bi-variate polynomial $A_{W_n}(s,t)$  satisfies 

\begin{equation}\label{eq.inv}
A_{W_n}(s,t) = A_{W_n}(t,s)
\end{equation}
as well as
\begin{equation}\label{eq.pal}
A_{W_n}(s,t) = (st)^{n-1} A_{W_n}(1/s, 1/t).
\end{equation}

In fact, (\ref{eq.pal}) follows from the bijection from $W_n$ onto itself, transforming a permutation to its reverse, whereas (\ref{eq.inv}) follows from the bijection taking each permutation to its inverse.

A bivariate polynomial satisfying Equations  (\ref{eq.inv}) and (\ref{eq.pal}) is called (bivariate) {\em palindromic of darga} $n-1$.

It can be proven (see \cite[pp.\ 72-79]{Pet Book}) that the set of bivariate palindromic polynomials of darga $n-1$ is a vector space of dimension $\lfloor (n+1)/2 \rfloor \cdot \lfloor (n+2)/2 \rfloor$ with {\em bivariate gamma basis}:
\[
\{(st)^i(s+t)^j(1+st)^{n-1-j-2i} \mid i,j \ge 0,\, 2i+j \le n-1 \}.
\]
\begin{definition}
A bivariate palindromic polynomial is called {\em gamma-positive} if all coefficients in its expression in terms of the bivariate gamma basis are non-negative integers. 
\end{definition}
\begin{example}
Recall that 
\begin{enumerate}
    \item $Simp_4=\{2413,3142\}$. The two-sided Eulerian polynomial $$A_{Simp_4}(s,t)=s^2t+st^2=(st)(s+t)$$ is gamma-positive.
    \item The polynomial $$A_{Simp_6}(s,t) = st(s+t)^2(1+st)+5(st)^2(1+st)+14(st)^2(s+t)$$ is also gamma-positive.
\end{enumerate}

\end{example}
In our previous paper \cite{Adin} we presented the following conjecture and used it for reducing the problem of gamma-positivity of the entire group $\mathcal{S}_n$ to that of $Simp_n$. 
\begin{con}
\label{conj:simple}
For each positive $n\in \mathbb{N}$, the two-sided Eulerian polynomial,
\[
A_{Simp_{n}}(s,t) = \sum_{\sigma \in Simp_n} s^{\des(\sigma)} t^{\ides(\sigma)}
\]
is gamma-positive.
\end{con}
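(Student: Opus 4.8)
The plan is to \emph{not} attack $A_{Simp_n}(s,t)$ head‑on, but to reduce the conjecture to the already‑established gamma‑positivity of the full symmetric group (Gessel's conjecture, proved by Lin \cite{Lin}) by inverting the substitution decomposition. The bridge is an additivity property of the two statistics under inflation: if $\pi=\sigma[\alpha_1,\dots,\alpha_k]$, then the boundary between two consecutive blocks is a descent exactly when the corresponding pair is a descent of the skeleton $\sigma$, so that $\des(\pi)=\des(\sigma)+\sum_{i=1}^{k}\des(\alpha_i)$; and, using the identity $(\sigma[\alpha_1,\dots,\alpha_k])^{-1}=\sigma^{-1}[\alpha_{\sigma^{-1}(1)}^{-1},\dots,\alpha_{\sigma^{-1}(k)}^{-1}]$, also $\ides(\pi)=\ides(\sigma)+\sum_{i=1}^{k}\ides(\alpha_i)$. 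The first step is to prove this additivity lemma, which makes the two‑sided Eulerian weight genuinely multiplicative across blocks.

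Next I would translate additivity into a functional identity mirroring Section \ref{generating function}. Writing $\mathcal F(s,t,x)=\sum_{n\ge 1}A_{\mathcal S_n}(s,t)x^n$ for the two‑sided Eulerian generating function of the full group (so $\mathcal F(1,1,x)=F(x)$) and $\widetilde S(s,t,y)=\sum_{k\ge 4}A_{Simp_k}(s,t)y^k$ for its simple part, the unique‑inflation structure of Theorem \ref{Atkinson} together with additivity gives, exactly as in Equation \eqref{gf 1},
\[
\mathcal F(s,t,x)=x+D(s,t,\mathcal F)+\widetilde S\bigl(s,t,\mathcal F(s,t,x)\bigr),
\]
where $D$ is the explicit rational contribution of the $\oplus$‑ and $\ominus$‑decomposable permutations (the bivariate analogue of the $2t^{2}/(1+t)$ term in Equation \eqref{klazar}), recording that a skew sum adds one descent and one inverse descent while a direct sum adds neither. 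Inverting as in Equations \eqref{st}--\eqref{final formula} produces the bivariate Albert--Atkinson--Klazar identity
\[
\widetilde S(s,t,y)=y-D(s,t,y)-\mathcal F^{-1}(s,t,y),
\]
so that $A_{Simp_n}(s,t)=[y^{n}]\widetilde S(s,t,y)$ is read off from the compositional inverse $\mathcal F^{-1}$ via Lagrange inversion.

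The second ingredient is that the bivariate gamma basis is closed under multiplication in a way compatible with the darga‑additivity of inflation: the product of basis elements $(st)^{I}(s+t)^{J}(1+st)^{d_0-J-2I}$ and $(st)^{I_i}(s+t)^{J_i}(1+st)^{d_i-J_i-2I_i}$ of dargas $d_0,d_1,\dots,d_k$ is again a single basis element, of darga $d_0+\cdots+d_k$, with the $(st)$‑ and $(s+t)$‑exponents adding. Since inflation of $\sigma\in Simp_k$ by blocks of sizes $n_i$ has darga $\bigl(\sum n_i\bigr)-1=(k-1)+\sum(n_i-1)$, every product $\prod_i A_{\mathcal S_{n_i}}(s,t)$ occurring in the Lagrange‑inversion expansion of $[y^{n}]\mathcal F^{-1}$ is gamma‑positive by \cite{Lin}. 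The entire content of the conjecture is therefore concentrated in the \emph{signs}: Lagrange inversion (equivalently, Möbius inversion over the substitution structure) delivers these gamma‑positive products with alternating signs, together with the alternating contribution of $D$, the same phenomenon already visible in the univariate coefficients $a_n=Com_n+2(-1)^{n}$.

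The main obstacle is precisely this sign control: one must show that after collecting all signed, gamma‑expanded terms, the net coefficient of each bivariate gamma monomial $(st)^{i}(s+t)^{j}(1+st)^{n-1-j-2i}$ in $A_{Simp_n}(s,t)$ is a non‑negative integer. I expect the decisive move to be the construction of a sign‑reversing involution on the signed family of product terms, cancelling every negative contribution against part of the positive ones and leaving a manifestly non‑negative remainder; the natural combinatorial counterpart would be a valley‑hopping--type group action on simple permutations that preserves both $\des$ and $\ides$ while never creating a forbidden $\oplus$ or $\ominus$ interval. Since the reduction of \cite{Adin} only uses gamma‑positivity of $Simp_n$ to \emph{deduce} it for $\mathcal S_n$, no such cancellation is available for free from \cite{Lin}, which is exactly why the statement remains conjectural.
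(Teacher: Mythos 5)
This statement is a conjecture in the paper (restated from \cite{Adin}); the paper offers no proof of it and instead uses it as a hypothesis in the proof of Theorem \ref{t:gamma_positivity_Wn}. So there is no ``paper proof'' to compare against, and your proposal must be judged on whether it actually closes the question. It does not, and you say so yourself in the final sentence. The preparatory steps are sound and consistent with the paper's own machinery: the additivity of $\des$ and $\ides$ under inflation is exactly Observation \ref{respect des}, the multiplicativity of the two-sided Eulerian weight and the closure of the bivariate gamma basis under products (with dargas adding) are the content of Observation \ref{obs:gamma_positive_of_inflation}, and the bivariate lift of the Albert--Atkinson--Klazar inversion is a reasonable analogue of Equations \eqref{gf 1}--\eqref{final formula}. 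But all of this only re-expresses $A_{Simp_n}(s,t)$ as an alternating sum of gamma-positive products; it does not prove that the alternating sum has non-negative gamma coefficients. Differences of gamma-positive polynomials are not gamma-positive in general (they need not even be coefficientwise non-negative), so the gamma-positivity of $A_{\mathcal{S}_n}(s,t)$ established by Lin transfers no positivity whatsoever to $A_{Simp_n}(s,t)$ through compositional inversion. The ``sign-reversing involution'' or ``valley-hopping action on simple permutations preserving $\des$ and $\ides$'' that you invoke is not constructed, and it is precisely the missing idea: without it the argument proves only that $A_{Simp_n}(s,t)$ lies in the integer span of the bivariate gamma basis (which already follows from the closure of $Simp_n$ under inverse and reverse, as noted around Equations \eqref{eq.inv}--\eqref{eq.pal}), not that the coefficients are non-negative.

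Two smaller cautions if you pursue this program. First, the correction term $D$ for $\oplus$- and $\ominus$-decomposable permutations is not the naive bivariate copy of $2t^2/(1+t)$: a skew sum of $m$ blocks contributes $m-1$ to both $\des$ and $\ides$ while a direct sum contributes $0$, so the two cases produce different weights and $D$ must be assembled accordingly; an error here would corrupt every coefficient from $n=4$ onward. Second, your identity for the inverse of an inflation, $(\sigma[\alpha_1,\dots,\alpha_k])^{-1}=\sigma^{-1}[\alpha_{\sigma^{-1}(1)}^{-1},\dots,\alpha_{\sigma^{-1}(k)}^{-1}]$, is correct and does yield the $\ides$ additivity you need, but you should state and verify it as a lemma rather than in passing, since the whole multiplicativity of the two-sided weight rests on it.
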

Assuming that Conjecture \ref{conj:simple} holds, we can prove the following theorem:
\begin{thm}
\label{t:gamma_positivity_Wn} 

For each $n \geq 1$ there exist non-negative integers $\gamma_{n,i,j}$ $(i,j \ge 0,\, 2i+j \leq n-1)$ such that
\[
A_{W_n}(s,t)=\sum\limits_{i,j}{\gamma_{n,i,j}(st)^i(s+t)^j(1+st)^{n-1-j-2i}}.
\]
\end{thm}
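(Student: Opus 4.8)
The plan is to combine the recursive structure of block-wise simple permutations with the multiplicativity of the pair $(\des,\ides)$ under inflation, and then to exploit the fact that the bivariate gamma basis is closed under multiplication. The conjecture supplies the only nontrivial ingredient, the gamma-positivity of $A_{Simp_k}(s,t)$; everything else is structural.

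First I would record how the two statistics behave under inflation. For $\pi=\sigma[\alpha_1,\ldots,\alpha_k]$ the descents of $\pi$ split into descents internal to the blocks and descents at block boundaries, and the boundary between the $i$-th and $(i+1)$-st block is a descent precisely when $\sigma$ has a descent at position $i$; hence $\des(\pi)=\des(\sigma)+\sum_{i=1}^k\des(\alpha_i)$. Applying the identity $(\sigma[\alpha_1,\ldots,\alpha_k])^{-1}=\sigma^{-1}[\alpha_{\sigma^{-1}(1)}^{-1},\ldots,\alpha_{\sigma^{-1}(k)}^{-1}]$ together with the previous formula to $\pi^{-1}$ gives $\ides(\pi)=\ides(\sigma)+\sum_{i=1}^k\ides(\alpha_i)$. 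Consequently the weight factorizes as $s^{\des(\pi)}t^{\ides(\pi)}=s^{\des(\sigma)}t^{\ides(\sigma)}\prod_{i=1}^k s^{\des(\alpha_i)}t^{\ides(\alpha_i)}$. Summing over all block-wise simple $\pi$ of order $n$, using the unique decomposition of Theorem \ref{Atkinson} (with $k\ge 4$) and the recursive Definition \ref{def of blockwise simple}, yields for $n\ge 4$ the weighted refinement of Observation \ref{formula}:
\[
A_{W_n}(s,t)=\sum_{k=4}^n A_{Simp_k}(s,t)\sum_{(n_1,\ldots,n_k)\in Comp(n,k)}\ \prod_{i=1}^k A_{W_{n_i}}(s,t),
\]
with base value $A_{W_1}(s,t)=1$ (and $A_{W_2}=A_{W_3}=0$ in the convention of the recursion).

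Next I would prove the crucial closure lemma: the product of two bivariate gamma-positive polynomials is again gamma-positive, with dargas adding. This reduces to the elementary identity
\begin{multline*}
(st)^{i_1}(s+t)^{j_1}(1+st)^{d_1-j_1-2i_1}\cdot (st)^{i_2}(s+t)^{j_2}(1+st)^{d_2-j_2-2i_2}\\
=(st)^{i_1+i_2}(s+t)^{j_1+j_2}(1+st)^{(d_1+d_2)-(j_1+j_2)-2(i_1+i_2)},
\end{multline*}
which exhibits a product of two gamma basis elements (of dargas $d_1$ and $d_2$) as a single gamma basis element of darga $d_1+d_2$, with $i=i_1+i_2$, $j=j_1+j_2$, and a legitimate nonnegative exponent of $(1+st)$ since $2i_1+j_1\le d_1$ and $2i_2+j_2\le d_2$. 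Expanding a product of gamma-positive polynomials therefore produces a nonnegative integer combination of gamma basis elements of the summed darga.

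Then I would close the argument by induction on $n$. The cases $n\in\{1,2,3\}$ are checked directly; in particular $A_{W_1}=1=(st)^0(s+t)^0(1+st)^0$ is gamma-positive of darga $0$, and the zero polynomial is gamma-positive of every darga. For $n\ge 4$, assume each $A_{W_m}$ with $m<n$ is gamma-positive of darga $m-1$. In every summand of the recursion, $A_{Simp_k}$ is gamma-positive of darga $k-1$ by Conjecture \ref{conj:simple}, and each factor $A_{W_{n_i}}$ has $n_i\le n-(k-1)\le n-3<n$, so by the inductive hypothesis it is gamma-positive of darga $n_i-1$. By the closure lemma, the product $A_{Simp_k}\prod_i A_{W_{n_i}}$ is gamma-positive of darga $(k-1)+\sum_i(n_i-1)=n-1$, and a nonnegative integer combination of such products is again gamma-positive, giving the claim for $A_{W_n}$. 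The routine parts are the statistic identities and the darga bookkeeping; the one point demanding care is the additivity of $\ides$, where I must check that summing $\ides(\alpha_{\sigma^{-1}(i)})$ over $i$ equals summing $\ides(\alpha_i)$, so that the reordering of blocks induced by $\sigma^{-1}$ leaves the total unchanged. I expect no genuine obstacle beyond this verification, since the conjecture already furnishes the single nontrivial input.
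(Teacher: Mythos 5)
Your proposal is correct and follows essentially the same route as the paper: induction on $n$ via the decomposition of $W_n$ into $Simp_n$ together with inflations $Simp_k[W_{n_1},\ldots,W_{n_k}]$, the additivity of $(\des,\ides)$ under inflation giving multiplicativity of the two-sided Eulerian polynomials, closure of the bivariate gamma basis under products, and Conjecture \ref{conj:simple} as the sole nontrivial input. You merely spell out two steps the paper leaves implicit (the product identity for gamma basis elements and the careful verification of $\ides$-additivity via the inverse of an inflation), which is a welcome but not substantively different elaboration.
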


To prove Theorem \ref{t:gamma_positivity_Wn}, we require some observations.

\begin{obs}\label{respect des}

Inflation is additive on both $\des$ and $\ides$. Explicitly:

Let $\sigma=\pi[\tau_1,\dots,\tau_k]$, then
\[
\des(\sigma)=\des(\pi)+\sum_{i=1}^n{\des(\tau_i)}
\]
and
\[
\ides(\sigma)=\ides(\pi)+\sum_{i=1}^n{\ides(\tau_i)}
\]
thus, 
\[
s^{\des(\sigma)}t^{\ides(\sigma)}=s^{\des(\pi)}t^{\ides(\pi)}\prod_{i=1}^n s^{\des(\tau_i)}t^{\ides(\tau_i)}.
\]
\end{obs}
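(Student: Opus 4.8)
The plan is to read off both identities directly from the explicit block structure of the inflation, and then obtain the $\ides$ statement from the $\des$ statement by inversion. Write $\sigma=\pi[\tau_1,\dots,\tau_k]$ and recall that $\sigma$ decomposes into $k$ consecutive \emph{position-blocks} $B_1,\dots,B_k$ read from left to right, where $B_i$ has length $n_i=|\tau_i|$, the entries of $B_i$ occupy a contiguous interval of \emph{values}, the relative order of the entries inside $B_i$ is that of $\tau_i$, and the value-intervals of the blocks are stacked according to $\pi$ (the value-interval of $B_i$ is the $\pi(i)$-th from the bottom). This is exactly the structure exhibited by the running example $2413[213,21,132,1]=546\,98\,132\,7$.

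For the descent identity I would partition the positions $p\in[n-1]$ into two kinds. If $p$ and $p+1$ lie in the same block $B_i$, then since $B_i$ is order-isomorphic to $\tau_i$ the pair is a descent of $\sigma$ if and only if the corresponding pair is a descent of $\tau_i$; summing over all blocks contributes $\sum_{i=1}^{k}\des(\tau_i)$. If $p$ is the last position of $B_i$ and $p+1$ the first position of $B_{i+1}$, then $\sigma(p)$ lies in the value-interval of $B_i$ and $\sigma(p+1)$ in that of $B_{i+1}$; since these value-intervals are disjoint and ordered by $\pi$, we get $\sigma(p)>\sigma(p+1)$ exactly when $\pi(i)>\pi(i+1)$. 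Hence the block boundaries contribute exactly $\des(\pi)$, and adding the two kinds gives $\des(\sigma)=\des(\pi)+\sum_{i=1}^{k}\des(\tau_i)$.

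For the inverse-descent identity I would exploit that inflation commutes with taking inverses: a short check on positions and values shows $\sigma^{-1}=\pi^{-1}[\tau_{\pi^{-1}(1)}^{-1},\dots,\tau_{\pi^{-1}(k)}^{-1}]$, i.e.\ $\sigma^{-1}$ is again an inflation, now of $\pi^{-1}$ by the inverses of the $\tau_i$ taken in the order dictated by $\pi^{-1}$ (one verifies this identity on the example, where it yields $687213954$ both ways). Applying the already-proved descent identity to $\sigma^{-1}$ and using $\des(\rho^{-1})=\ides(\rho)$ gives $\ides(\sigma)=\des(\sigma^{-1})=\des(\pi^{-1})+\sum_{i=1}^{k}\des(\tau_i^{-1})=\ides(\pi)+\sum_{i=1}^{k}\ides(\tau_i)$, the last sum being merely a reindexing of the same multiset. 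Alternatively one may avoid constructing $\sigma^{-1}$ explicitly and instead count, for each $v\in[n-1]$, whether the value $v+1$ occurs to the left of $v$ in $\sigma$: such $v$ either lie inside a single value-interval, contributing $\ides(\tau_i)$, or straddle two consecutive value-intervals, in which case the relevant blocks occur in the opposite positional order precisely when $r\mapsto\pi^{-1}(r)$ has a descent, contributing $\ides(\pi)$.

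Finally, the displayed product identity for $s^{\des(\sigma)}t^{\ides(\sigma)}$ is an immediate consequence of the two additive formulas and needs no further argument. The only delicate point is the bookkeeping in the inversion step — verifying that $\sigma^{-1}$ really is the inflation $\pi^{-1}[\tau_{\pi^{-1}(1)}^{-1},\dots,\tau_{\pi^{-1}(k)}^{-1}]$ with the blocks correctly permuted and individually inverted (equivalently, matching the boundary inverse-descents of $\sigma$ with the descents of $\pi^{-1}$) — since that is where the indices are easiest to misplace; everything else is a direct reading of the block decomposition.
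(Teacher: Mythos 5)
Your proof is correct and complete. Note that the paper offers no proof at all here: the statement appears as an Observation and is used immediately afterwards (in the proof of the product formula for two-sided Eulerian polynomials) without justification, so your argument fills a gap rather than paralleling anything. The two halves of your argument are both sound: the partition of positions $p\in[n-1]$ into within-block pairs (contributing $\sum_i\des(\tau_i)$, since each block is order-isomorphic to $\tau_i$) and boundary pairs (contributing $\des(\pi)$, since the value intervals are disjoint and stacked according to $\pi$) is the standard and correct way to get the $\des$ identity; and your inversion identity $\sigma^{-1}=\pi^{-1}[\tau_{\pi^{-1}(1)}^{-1},\dots,\tau_{\pi^{-1}(k)}^{-1}]$ is exactly right, because the $j$-th value interval of $\sigma$ is precisely the position block $B_{\pi^{-1}(j)}$, so in $\sigma^{-1}$ the blocks are permuted by $\pi^{-1}$ and individually inverted. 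Your worked example checks out: $\sigma=2413[213,21,132,1]=546981327$ has $\sigma^{-1}=687213954=3142[132,213,1,21]$, matching the formula. You correctly identify the inversion bookkeeping as the only delicate point, and you handle it; the reindexing remark (that $\sum_i\ides(\tau_{\pi^{-1}(i)})=\sum_i\ides(\tau_i)$ as a permutation of the same multiset) is also needed and you state it. One small point worth flagging: the paper's statement contains a typo --- the sums are written as $\sum_{i=1}^{n}$ but should run to $k$, since $\pi\in\mathcal{S}_k$ and there are $k$ inflating permutations --- and your proof silently and correctly uses $k$ throughout.
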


\begin{defn}
Let $\mathcal{A}\subset S_k$ and $\mathcal{B}_1,\ldots,\mathcal{B}_k$ be sets of permutations. 
Let us define 
\[
\mathcal{A}[\mathcal{B}_1,\ldots,\mathcal{B}_k] 
= \{\alpha[\beta_1,\ldots,\beta_k] \mid \alpha \in \mathcal{A},\, \beta_i \in \mathcal{B}_i \}.
\]

\end{defn}

If $\mathcal{B}_i \subset S_{n_i}$ ($i \in \{1,\dots,k\}$), then $\mathcal{A}[\mathcal{B}_1,\ldots,\mathcal{B}_k] \subset S_n$ where $n=\sum n_i$. 
\begin{prop}
The two-sided Eulerian polynomial of $\mathcal{A}[\mathcal{B}_1,\ldots,\mathcal{B}_k]$ is the product of the two-sided Eulerian polynomials of $\mathcal{A}$ and $\mathcal{B}_1,\ldots,\mathcal{B}_k$.
\end{prop}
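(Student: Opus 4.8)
The plan is to deduce the identity
$$A_{\mathcal{A}[\mathcal{B}_1,\ldots,\mathcal{B}_k]}(s,t) = A_{\mathcal{A}}(s,t)\prod_{i=1}^{k}A_{\mathcal{B}_i}(s,t)$$
from two ingredients: the multiplicativity of the weight $s^{\des}t^{\ides}$ under inflation, already recorded in Observation \ref{respect des}, together with the fact that inflation with \emph{fixed} block sizes is a bijection onto its image. The whole argument is then a reindexing of a finite sum.

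First I would show that the inflation map
$$\Phi:\mathcal{A}\times\mathcal{B}_1\times\cdots\times\mathcal{B}_k\longrightarrow \mathcal{A}[\mathcal{B}_1,\ldots,\mathcal{B}_k],\qquad \Phi(\alpha,\beta_1,\ldots,\beta_k)=\alpha[\beta_1,\ldots,\beta_k],$$
is a bijection. Surjectivity is immediate from the definition of $\mathcal{A}[\mathcal{B}_1,\ldots,\mathcal{B}_k]$. The content is injectivity, and here the crucial point is that each factor $\mathcal{B}_i$ lies in a single symmetric group $S_{n_i}$, so the tuple of block sizes $(n_1,\ldots,n_k)$ is fixed in advance. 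Consequently, in any value $\sigma=\alpha[\beta_1,\ldots,\beta_k]$ the $i$-th block occupies the prescribed positions $s_i+1,\ldots,s_i+n_i$ with $s_i=n_1+\cdots+n_{i-1}$, and by Definition \ref{inflation} its entries form a contiguous range of values. Standardizing the entries in these positions recovers $\beta_i$ uniquely, while the relative order of the $k$ value-ranges recovers $\alpha$; hence the preimage of $\sigma$ is unique and $\Phi$ is injective.

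Next I would invoke Observation \ref{respect des} in the pointwise form
$$s^{\des(\alpha[\beta_1,\ldots,\beta_k])}\,t^{\ides(\alpha[\beta_1,\ldots,\beta_k])} = s^{\des(\alpha)}t^{\ides(\alpha)}\prod_{i=1}^{k}s^{\des(\beta_i)}t^{\ides(\beta_i)},$$
valid for every tuple $(\alpha,\beta_1,\ldots,\beta_k)$. Summing this identity over the product set and using $\Phi$ to reindex the left-hand side as a sum over $\mathcal{A}[\mathcal{B}_1,\ldots,\mathcal{B}_k]$, the sum of products becomes a product of sums: the $\alpha$-sum produces $A_{\mathcal{A}}(s,t)$ and each $\beta_i$-sum produces $A_{\mathcal{B}_i}(s,t)$, which yields the claimed factorization.

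The main obstacle is the injectivity of $\Phi$, where one must be careful that the decomposition $\sigma=\alpha[\beta_1,\ldots,\beta_k]$ is genuinely unique. This is not the uniqueness of Theorem \ref{Atkinson}, which concerns inflation of a \emph{simple} permutation; here uniqueness is instead forced by fixing the block sizes through the constraints $\mathcal{B}_i\subset S_{n_i}$. Once injectivity is secured, the remaining steps are a routine interchange of summation and product.
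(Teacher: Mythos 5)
Your proposal is correct and follows essentially the same route as the paper: apply Observation \ref{respect des} pointwise and then interchange the sum over the product set with the product of sums. The only difference is that you make explicit the injectivity of the inflation map (forced by the fixed block sizes $\mathcal{B}_i\subset S_{n_i}$), which the paper's proof uses implicitly when it reindexes the sum over $\mathcal{A}[\mathcal{B}_1,\ldots,\mathcal{B}_k]$ as a sum over tuples; this is a worthwhile clarification but not a different argument.
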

\begin{proof}
From Observation \ref{respect des}, the following can be obtained
\[
\begin{aligned}
\sum_{\sigma=\pi[\tau_1,\dots,\tau_k] \in \mathcal{A}[\mathcal{B}_1,\ldots,\mathcal{B}_k]}s^{\des(\sigma)}t^{\ides(\sigma)}&=\sum_{\pi \in \mathcal{A},  \tau_1 \in \mathcal{B}_1,\dots,\tau_k \in \mathcal{B}_k} \bigg(s^{\des(\pi)}t^{\ides(\pi)}\prod_{i=1}^n s^{\des(\tau_i)}t^{\ides(\tau_i)}\bigg) \\
&=\sum_{\pi \in \mathcal{A}}s^{\des(\pi)}t^{\ides(\pi)} \prod_{i=1}^n \bigg(\sum_{\tau_i \in \mathcal{B}_i}s^{\des(\tau_i)}t^{\ides(\tau_i)} \bigg)
\end{aligned}
\]
as required.
\end{proof}

\begin{obs}
\label{obs:gamma_positive_of_inflation}
If the two-sided Eulerian polynomials of $\mathcal{A}$ and $\mathcal{B}_1,\ldots,\mathcal{B}_k$ are gamma positive, then the two-sided Eulerian polynomial of $\mathcal{A}[\mathcal{B}_1,\ldots,\mathcal{B}_k]$ is also gamma positive.
\end{obs}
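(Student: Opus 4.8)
The plan is to combine the product formula from the preceding Proposition with a multiplicative closure property of the bivariate gamma basis. First I would invoke the Proposition to write
$$A_{\mathcal{A}[\mathcal{B}_1,\ldots,\mathcal{B}_k]}(s,t) = A_{\mathcal{A}}(s,t) \cdot \prod_{i=1}^{k} A_{\mathcal{B}_i}(s,t),$$
so that the task reduces to showing that a product of gamma-positive bivariate polynomials is again gamma-positive, with the correct darga.

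The key step is the observation that the bivariate gamma basis is closed under multiplication, with dargas adding. Concretely, a basis element of darga $m_1$ times a basis element of darga $m_2$ satisfies
$$(st)^{i_1}(s+t)^{j_1}(1+st)^{m_1-j_1-2i_1}\cdot(st)^{i_2}(s+t)^{j_2}(1+st)^{m_2-j_2-2i_2}=(st)^{i_1+i_2}(s+t)^{j_1+j_2}(1+st)^{(m_1+m_2)-(j_1+j_2)-2(i_1+i_2)},$$
which is again a gamma basis element, now of darga $m_1+m_2$, with index pair $(i_1+i_2,\,j_1+j_2)$; the admissibility condition $2(i_1+i_2)+(j_1+j_2)\le m_1+m_2$ follows by adding the two constraints $2i_\ell+j_\ell\le m_\ell$.

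Given this, I would expand the product. Using the hypothesis, write $A_{\mathcal{A}}$ and each $A_{\mathcal{B}_i}$ in their gamma basis expansions with non-negative integer coefficients. Distributing turns the product into a sum, over all ways of selecting one basis element from each factor, of a product of basis elements weighted by a product of non-negative integers. By the closure identity above, each such product is a single gamma basis element, and collecting like terms replaces the coefficients by their sums, which remain non-negative integers. A short bookkeeping check confirms that, since $\mathcal{A}\subset S_k$ has darga $k-1$ and $\mathcal{B}_i\subset S_{n_i}$ has darga $n_i-1$, the total darga is $(k-1)+\sum_{i=1}^{k}(n_i-1)=n-1$, matching the darga of $A_{\mathcal{A}[\mathcal{B}_1,\ldots,\mathcal{B}_k]}(s,t)$, so the final expansion is genuinely in the gamma basis of the correct darga.

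The only real content is the multiplicative closure identity; everything else is routine distribution and bookkeeping, so I expect no serious obstacle. The one subtlety to handle with a word of care is that distinct selections of basis elements may collapse onto the same product element, but since we only ever sum non-negative integers this causes no difficulty. I would also note that treating the two-factor case suffices: the full $(k+1)$-fold statement then follows immediately by associativity of multiplication.
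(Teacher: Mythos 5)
Your proposal is correct and is exactly the argument the paper tacitly relies on: the paper states this as an Observation without proof, treating it as immediate from the preceding Proposition's product formula together with the fact that products of bivariate gamma basis elements are again basis elements with dargas adding. Your closure identity, the darga count $(k-1)+\sum_{i}(n_i-1)=n-1$, and the distribution/collection of non-negative coefficients are all accurate, so you have simply filled in the details the paper omits.
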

We proceed to the proof of Theorem \ref{t:gamma_positivity_Wn}.
\begin{proof}[Proof of Theorem \ref{t:gamma_positivity_Wn}]
Use induction on $n$.
For $n=1$ and $n=4$ we have $A_{W_1}(s,t)=1$ and $A_{W_4}(s,t)=(st)(s+t)$ which are gamma positive.

Let $n>4$ and assume that the polynomial $A_{W_k}(s,t)$ is gamma-positive for every $k<n$. Note that $W_n=Simp_n \cup U_n$ where
$U_n=\bigcup\limits_{4 \leq k \leq n-3 \atop n_1+\cdots n_k=n}Simp_k[W_{n_1}\ldots,W_{n_k}]$. Therefore, the two-sided Eulerian polynomial of $W_n$ is the sum of the two-sided Eulerian polynomial of $Simp_n$ and the two-sided Eulerian polynomials of $Simp_k[W_{n_1}\ldots,W_{n_k}]$. By Conjecture \ref{conj:simple}, the induction hypothesis, and Observation \ref{obs:gamma_positive_of_inflation}, these polynomials are gamma-positive and we are done.

\end{proof}

\section{Asymptotics- an open question and a conjecture}

One might use Equation (\ref{recursion}) to obtain an upper bound for the proportion of the number of blockwise simple permutations to the magnitude of the entire set of permutations. Thus, we write 
for each $n\in \mathbb{N}$, $A_n=W_n-Simp_n$ and we are interested in the asymptotic behavior of $R_n=\frac{|A_n|}{|S_n|}$.
Experimental checks show that this ratio tends to zero when $n$ tends to infinity as can be seen in the following table. Actually, we have corroborating data up to $n=23$.

\begin{center}
\begin{tiny}
\begin{tabular}{r||r|r|r|r|r|r|r|r|r|r|r|r}
   $n$ &  1 & 2 & 3 & 4 & 5 & 6 & 7 & 8 & 9 & 10 &11 &12 \\
  \hline\hline
$R_n$  & 0 & 0  & 0 & 0 & 0 & 0 & 0.0031746 & 0.00267857 & 0.00303131 & 0.0029343 & 0.00273389 & 0.00247482
\end{tabular}
\end{tiny}
\end{center}

so we have the following conjecture:

\begin{con}
The proportion $R_n$ tends to $0$ when $n$ tends to infinity.  
\end{con}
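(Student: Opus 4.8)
The plan is to prove the conjecture by bounding $|A_n|$ from above and showing that this bound, divided by $n!$, vanishes. Write $A_n=W_n\setminus Simp_n$, so that $R_n=|A_n|/n!$. Every $\pi\in A_n$ is block-wise simple but not simple, hence by Definition \ref{def of blockwise simple} (together with Theorem \ref{Atkinson}) it is a nontrivial inflation $\sigma[\alpha_1,\dots,\alpha_k]$ of a simple $\sigma$ of order $k\ge 4$ in which at least one factor $\alpha_i$ has order $\ge 2$; by Definition \ref{def by 2 blocks} no factor can have order $2$ or $3$, so $|\alpha_i|\ge 4$, while the remaining $k-1\ge 3$ factors each have order $\ge 1$, forcing $|\alpha_i|\le n-3$. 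Thus every $\pi\in A_n$ possesses a proper block whose size $m$ lies in $[4,n-3]$, which gives the union bound $A_n\subseteq\bigcup_{m=4}^{n-3}B_m$, where $B_m=\{\pi\in W_n:\pi\text{ has a proper block of size }m\}$, and hence $|A_n|\le\sum_{m=4}^{n-3}|B_m|$.

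The central step is the estimate $|B_m|\le(n-m+1)\,w_{n-m+1}\,w_m$. I would obtain it from the map that sends $\pi\in B_m$, together with its leftmost proper block $B$ of size $m$, to the triple consisting of the quotient $\pi/B$ (the pattern obtained by collapsing $B$ to a single entry), the internal pattern of $B$, and the starting position $j$ of $B$. First one checks, using Definition \ref{def by 2 blocks}, that both the internal pattern of $B$ and the quotient $\pi/B$ are again block-wise simple: a $\oplus$- or $\ominus$-decomposable interval inside $B$, or inside $\pi/B$, would lift to such an interval of $\pi$. Hence the quotient lies in $W_{n-m+1}$, the pattern in $W_m$, and $j$ ranges over $\{1,\dots,n-m+1\}$; since the triple determines $\pi$ (reinflate the $j$-th entry of the quotient by the pattern of $B$), the map is injective and the bound follows.

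Combining these with the trivial inequality $w_j=|W_j|\le|S_j|=j!$ yields
\[
R_n=\frac{|A_n|}{n!}\le\sum_{m=4}^{n-3}\frac{(n-m+1)\,(n-m+1)!\,m!}{n!}=\sum_{m=4}^{n-3}\frac{(n-m+1)^2}{\binom nm},
\]
so it remains to show that the right-hand sum tends to $0$. I would split the range at $m=\lfloor n/2\rfloor$ and apply $\binom nm\ge(n/m)^m$, which bounds each summand by $(n-m+1)^2(m/n)^m$ (and symmetrically, with $m$ replaced by $n-m$, for the upper half). These quantities decay geometrically as $m$ moves away from the endpoints, so the sum is dominated by its boundary terms; the largest is the $m=4$ term, of order $(n-3)^2/\binom n4=\Theta(1/n^2)$, whence the whole expression is $O(1/n^2)\to 0$.

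The main obstacle is precisely this final estimate. The sum has $\Theta(n)$ terms, and for small $m$ its numerators are as large as $\Theta(n^2)$, so a uniform polynomial lower bound such as $\binom nm\ge\binom n3$ only produces an $O(1)$ bound and does not suffice; one must exploit the super-polynomial growth of $\binom nm$ for $m$ bounded away from the two ends, so that all but $O(1)$ terms near each endpoint are negligible. A secondary but more routine point is the stability lemma underlying the counting bound — that blocks and quotients of block-wise simple permutations are themselves block-wise simple — for which the two-block formulation of Definition \ref{def by 2 blocks}, rather than the recursive Definition \ref{def of blockwise simple}, is the convenient tool.
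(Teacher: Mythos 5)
The paper does not prove this statement at all: it is stated as a conjecture, supported only by computations up to $n=23$ (a related asymptotic sketch exists only in commented-out source and is not part of the paper). Your proposal is therefore not a variant of the paper's argument but an actual proof of the conjecture, and as far as I can check it is correct. The structural step is sound: if $\pi\in W_n\setminus Simp_n$, then in the decomposition $\pi=\sigma[\alpha_1,\dots,\alpha_k]$ of Theorem \ref{Atkinson} some $\alpha_i$ is nontrivial, and since $W_2=W_3=\emptyset$ it has order $m$ with $4\le m\le n-3$; your heredity lemma (the pattern of a block, and the quotient by a block, of a block-wise simple permutation are again block-wise simple) holds because any $p_1\oplus p_2$ or $p_1\ominus p_2$ interval of the block or of the quotient lifts, via inclusion or via inflation of the collapsed entry, to such an interval of $\pi$; and the resulting injection $\pi\mapsto(\pi/B,\,\mathrm{pat}(B),\,j)$ gives $|B_m|\le (n-m+1)\,w_{n-m+1}\,w_m$. (As a sanity check, this bound is tight for $n=7,8$, reproducing $|A_7|=16$ and $|A_8|=48+60=108$.)

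The one point you flag as the main obstacle — showing $\sum_{m=4}^{n-3}(n-m+1)^2/\binom nm\to 0$ — is in fact routine and needs nothing as delicate as $\binom nm\ge (n/m)^m$. Peel off the five boundary terms $m\in\{4,5,n-5,n-4,n-3\}$, each of which is individually $O(1/n^2)$; for the remaining $6\le m\le n-6$ one has $\binom nm\ge\binom n6=\Theta(n^6)$, so each of the at most $n$ remaining terms is $O(n^{-4})$ and their total is $O(n^{-3})$. Hence $R_n=O(1/n^2)$, which settles the conjecture and is consistent with the table (the true order appears to be about $2(1+4e^{-0})/(e^2n^2)$-ish, dominated by inflations $\sigma[\tau,1,\dots,1]$ with $\sigma\in Simp_{n-3}$, $\tau\in W_4$). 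Two small presentational corrections: the paper's table lists $|W_2|=2$, but the text (and Definition \ref{def of blockwise simple}) make clear $W_2=W_3=\emptyset$, which is what your argument needs; and you should state explicitly that for $m\in\{n-2,n-1\}$ the set $B_m$ is empty because the quotient would have to lie in $W_2$ or $W_3$, which is why restricting to $m\le n-3$ loses nothing.
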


\section*{Acknowledgments}
The authors want to thank Ruth Hoffman and her team for helping us in the use of their software that contributed a lot to the expermiental side of this work.

%


\begin{thebibliography}{sta}
\bibitem{Adin} {R. Adin, E. Bagno, E. Eisenberg, S. Reches, M. Sigron} ,{ On Two-Sided Gamma-Positivity for Simple Permutations}, The Electronic Journal of Combinatorics. Volume 25 Issue 2, 2018.
\bibitem{Pet Book}
T.\ K.\ Petersen, 
{\em Eulerian numbers}, 
Birkhauser, Basel, 2015. 


\bibitem{AAK} 
M.\ H.\ Albert, M.\ D.\ Atkinson and M.\ Klazar, 
{\em The enumeration of simple permutations}, 
J.\ Integer Sequences {\bf 6}, Article 03.4.4, 2003. 

 
\bibitem{Branden} 
P.\ Br\"and\'en, 
{\em Actions on permutations and unimodality of descent polynomials},
European J.\ Combin.\ {\bf 29}, 514--531, 2008. 


\bibitem{CRS} 
L.\ Carlitz, D.\ P.\ Roselle and R.\ A.\ Scoville, 
{\em Permutations and sequences with repetitions by number of increases},
J.\ Combinat.\ Theory {\bf 1}, 350--374,  1966.


 

\bibitem{Lin} 
Z.\ Lin, 
{\em Proof of Gessel's $\gamma$-positivity conjecture}, 
Electronic J.\ Combinat.\ {\bf 23}, \#P3.15.  2016. 


\bibitem{Pet Book}
T.\ K.\ Petersen, 
{\em Eulerian numbers}, 
Birkhauser, Basel, 2015. 





\bibitem{Z} 
D.\ Zeilberger, 
{\em A one-line high school proof of the unimodality of the Gaussian polynomials $\binom{n}{k}_q$ for $k<20$}, 
in: D.\ Stanton (Ed.), $q$-Series and Partitions, 
Minneapolis, MN, 67-72, 1988. 

\bibitem{AA05} M. H. Albert and M. D. Atkinson. Simple permutations and pattern restricted permuta-
tions. Discrete Math., 300:1{15}, 2005.

\bibitem{AAK} M. H. Albert, M. D. Atkinson and M. Klazar,{The enumeration of simple permutations},Journal of Integer Sequences, Vol. 6 ,Article 03.4.4, 2003. 

\bibitem{BCL} M. Bouvel, L. Cionl and B. Izart ,{The interval posets of permutations seen from the decomposition tree perspective}


\bibitem{CRS} 
L.\ Carlitz, D.\ P.\ Roselle and R.\ A.\ Scoville, 
{\em Permutations and sequences with repetitions by number of increases},
J.\ Combinat.\ Theory {\bf 1}, 350--374,  1966.





\bibitem{FS} P. Flajolet and R. Sedgewick, {\it Analytic Combinatorics}. Cambridge University Press, 2008.

\bibitem{EC2} R.P. Stanley, {\it Enumerative Combinatorics}, Vol. 2, Cambridge University Press, 1999.

\bibitem{T} Tenner, B.E. Interval Posets of Permutations. Order. https://doi.org/10.1007/s11083-021-09576-1, 2022.




\bibitem{Sl}{Sloane, N. J. A.}, Sequence A054514. 

\bibitem{Sl2}{Sloane, N. J. A.}, Sequence A054515. 




\end{thebibliography}
\end{document}